\numberwithin{equation}{section}
\newcommand{\I}{\mathbb{I}}
\newcommand{\R}{\mathbb{R}}
\DeclareMathOperator{\esssup}{ess\,sup}
\DeclareMathOperator{\essinf}{ess\,inf}
\newtheorem{theorem}{Theorem}
\newtheorem{corollary}{Corollary}
\newtheorem{lemma}{Lemma}
\theoremstyle{definition}
\newtheorem{example}{Example}
\author{Jaakko Lehtomaa\footnote{e-mail address: jaakko.lehtomaa@helsinki.fi}}
\title{Asymptotic behaviour of ruin probabilities in a general discrete risk model using moment indices}
\begin{document}

\maketitle

\begin{abstract}
We study the rough asymptotic behaviour of a general economic risk model in a discrete setting. Both financial and insurance risks are taken into account. Loss during the first $n$ years is modelled as a random variable $B_1+A_1B_2+\ldots+A_1\ldots A_{n-1}B_n$, where $A_i$ corresponds to the financial risk of the year $i$ and $B_i$ represents the insurance risk respectively. Risks of the same year $i$ are not assumed to be independent.

The main result shows that ruin probabilities exhibit power law decay under general assumptions. Our objective is to give a complete characterisation of the relevant quantities that describe the speed at which the ruin probability vanishes as the amount of initial capital grows. These quantities can be expressed as maximal moments, called moment indices, of suitable random variables. In addition to the study of ultimate ruin, the case of finite time interval ruin is considered. Both of these investigations make extensive use of the new properties of moment indices developed during the first half of the paper.
\end{abstract}

\noindent \emph{Keywords}: Ruin theory; Perpetuity; Heavy-tailed; Moment index; Insurance mathematics
\\
\noindent\emph{Mathematics Subject Classification (2010)}: 60E05; 60G07; 60K25; 60K35

\section{Introduction}
\label{intro}

\subsection{Motivation}\label{notmot}
Assume $(\Omega,\mathcal{F},P)$ is a probability space. Define a stochastic process $(Y_n)_{n=1}^\infty=(Y_n)$ on $(\Omega,\mathcal{F},P)$ by the formula
\begin{equation}\label{modeldesc}
Y_n=\sum_{i=1}^n A_1 \ldots A_{i-1} B_i,
\end{equation}
where the product over an empty set is understood to be $1$. The process \eqref{modeldesc} allows us to define two maximal random variables. We define
\begin{equation}\label{ultimatevariable}
\bar{Y}=\sup_{k \in \mathbb{N}} \{Y_k\}
\end{equation}
for an unbounded time interval and 
\begin{equation}\label{finitevariable}
\bar{Y}_n=\sup_{1\leq k \leq n} \{Y_k\}
\end{equation}
for a bounded time interval. Under additional assumptions we may also define perpetuity
\begin{equation}\label{infinitesumvariable}
Y_\infty=\sum_{i=1}^\infty A_1 \ldots A_{i-1} B_i.
\end{equation}

Processes of the form \eqref{modeldesc} have attracted a fair amount of interest in recent years. An intriguing article \cite{alsmeyer2009distributional} investigated this kind of process and recovered results concerning the maximal moments of the random variable $|Y_\infty|$. To name a few others, \cite{Tang2003299}, \cite{randomsum} and \cite{tang2004finite} have also used the model \eqref{modeldesc}, but mainly to study the bounded interval case. For example, \cite{Tang2003299} gives results concerning the moments of \eqref{finitevariable} under independence assumption. 

The contribution of the present paper is twofold. Firstly, we want to expand the study of moments of absolute values to the study of moments of, say, the positive parts of random variables. This allows us to study one-sided level crossing situations that are typically encountered in insurance mathematics in the form of ruin problems. Our proofs do not need complicated mathematical machinery, which allows us to work with truly minimal assumptions. Secondly, the dependence of the financial and insurance risks of the same year has not, to our knowledge, been investigated in the context of determining the maximal moments of processes \eqref{ultimatevariable} and \eqref{finitevariable}. Yet, such dependence can be found in many applications.

To give an example, we recall the famous ARCH(1) process $(X_n)$ with parameters $\beta,\lambda>0$. It is defined recursively by the formula 
\begin{equation*}
X_n=\sqrt{\beta+\lambda X_{n-1}^2}Z_n, \ n\in \mathbb{N},
\end{equation*}
where $(Z_n)$ is a sequence of independent and identically distributed, abbreviated IID, random variables and $X_0$ is independent of $(Z_n)$. For the squared process $(X_n^2)$ we have
\begin{equation}\label{arch1}
X_n^2=\beta Z_n^2+\lambda Z_n^2X_{n-1}^2.
\end{equation}
If $X_0=0$, iteration of \eqref{arch1} yields a random variable whose distribution is that of \eqref{modeldesc} with  $(A_i,B_i)=(\lambda Z_i^2,\beta Z_i^2)$. Clearly, $A_i$ and $B_i$ are not independent in this case. In Section \ref{stochdiff} below the connection between \eqref{modeldesc} and recursive equations of the type \eqref{arch1} is explained in detail.

Another natural example of the occurrence of \eqref{modeldesc} with dependent risks can be found from the continuous time analogue of \eqref{modeldesc}. Using the notation of Section 3 of \cite{Harri2001265}, the continuous time analogue $(Y_t^c)_{t\geq 0}$ of the process $(Y_n)$ is defined by 
\begin{equation}\label{contanalogue}
Y_t^c=\int_0^t \! A_{s-}^c \, \mathrm{d} B_s^c.
\end{equation}
In \eqref{contanalogue} the process $((\log A_t^c,B_t^c))_{t\geq 0}$ is defined on a suitable filtered probability space $(\Omega, \mathcal{F}_{t\geq 0},\mathbb{P})$. It is assumed to be stationary with independent increments and càdlàg sample paths. With choices
\begin{equation*}
A_t^c=\prod_{m=1}^{\lfloor t \rfloor} A_m 
\end{equation*}
and
\begin{equation*}
 B_t^c=\sum_{m=1}^{\lfloor t \rfloor} B_m
\end{equation*}
model \eqref{modeldesc} can be recovered from \eqref{contanalogue}. By the terminology of \cite{Harri2001265}, \emph{the associated discrete time process} of \eqref{contanalogue} is defined by fixing the IID random vectors $(A_1,B_1),(A_2,B_2),\ldots$ via equation
\begin{equation}
(A,B)\stackrel{d}{=}(A_1^c,Y_1^c),
\end{equation}
where $\stackrel{d}{=}$ denotes equality in distribution. The interesting observation is that the vector $(A,B)$ does not necessarily have independent components even if the original processes $(A_t^c)$ and $(B_t^c)$ are independent. Further background to processes of the type \eqref{contanalogue} and ruin analysis in these models can be found from  \cite{paulsen2002cramer}. The reader is also advised to see a survey article \cite{paulsen1998ruin} from the same author.

\subsection{Assumptions}\label{alkuassumptions}
The process $(Y_n)$ defined in \eqref{modeldesc} describes the total loss of an insurer at the end of the year $n\in \mathbb{N}=\{1,2,3,\ldots \}$. We assume that the sequence $(A_1,B_1),(A_2,B_2),\ldots$ satisfies the following assumptions.
\begin{enumerate}[I]
\item The sequence $((A_i,B_i))$ consists of IID random vectors.\label{as3}
\item The members of $(A_i)$ are strictly positive and $A$ is not the constant $1$. \label{as1}
\item The members of $(B_i)$ are real valued and $P(B>0)>0$.\label{as2}
\end{enumerate}

In a sense, assumptions \ref{as1} and \ref{as2} are minimal. If $A$ was the constant $1$, model \eqref{modeldesc} would reduce to a classical random walk. However, random walks have entirely different asymptotic behaviour. This is the reason they are only mentioned as comments in Section \ref{conclusions}. On the other hand, if the equality $P(B>0)=0$ was valid, ruin would be impossible.

\subsection{Background}

Assume that the insurer operates with initial capital $U_0>0$. Denote by $U_n$ the capital of the insurer at the end of the year $n$. Let $r_n$ be the associated stochastic return on investment over the period from $n-1$ to $n$. Our assumption is that the sequence $(U_n)$ satisfies the recursive formula
\begin{equation}\label{modelrec}
U_n=(1+r_n)(U_{n-1}-B_n).
\end{equation} 
For further background of this recursion and the relation between \eqref{modeldesc} and \eqref{modelrec}, see \cite{Harri1999319}, \cite{Harri2001265} and especially \cite{nyrhinen2011stochastic}.

The random variable $A$ of model \eqref{modeldesc} can be regarded as a stochastic discounting factor $1/(1+r)$. Since $A>0$, the random variable $r$ must satisfy the inequality $r>-1$. This means that losses on financial markets are possible, but one cannot lose more than what is initially invested.

We define the time of ruin $T_{U_0}=T$ by
\begin{equation}\label{ruintime1}
T=\inf\{n :U_n<0\}
\end{equation}
with the use of the convention $\inf \emptyset=\infty$. Formula \eqref{ruintime1} can be rewritten in the alternative form 

\begin{equation}\label{ruintime2}
T=\inf\{n :Y_n>U_0\}
\end{equation}
as is proven in \cite{nyrhinen2011stochastic}. These two representations of ruin time act as a link between the processes $(U_n)$ and $(Y_n)$. Clearly, $\{T<\infty\}=\{\bar{Y}>U_0\}$ and $\{T\leq n\}=\{\bar{Y}_n>U_0\}$.

Next, we recall some general notations. Let $x^+$ denote the positive part $\max(0,x)$ of a real number $x$.  If $X$ is a random variable, the quantity $\I(X)\in [0,\infty]$ is defined by
\begin{equation}\label{momentindexdef}
\I(X)=\sup\{s\geq 0: E((X^+)^s)<\infty\}.
\end{equation}
Quantity $\I(X)$ is called \emph{the moment index} of the random variable $X$. Equation \eqref{momentindexdef} is reasonable because the function $E((X^+)^s)$ is a convex function on $\mathbb{R}$. For a comprehensive treatment of convex functions, see \cite{rockafellar1997convex}.

We say that a random variable $X$ is (right) \emph{heavy-tailed} if no exponential moments exist. That is, if $\sup\{s\geq 0: E(e^{sX})<\infty\}=0$. We note that the moment index of a random variable  may obtain a finite value only when $X$ is heavy tailed.  In fact, moment index is a risk measure of heavy tailed distributions. 

As a general rule one may argue that a smaller moment index means larger risk in the sense of a heavier tail. A heavy tail on the other hand implies the increased possibility of very large realisations. From this fact we see that the moment index is important in questions related to ruin. For an introduction to heavy-tailed and especially subexponential distributions, the reader is referred to \cite{foss2011introduction}. Furthermore, to see how such distributions affect the behaviour of random walks, see \cite{borovkov2008asymptotic}.

It is known from \cite{daley2006moment} Lemma BDK or \cite{baltrunas2004tail} that there is a connection between the moment index and the tail of a random variable. Namely,
\begin{equation}\label{tailconnection}
\limsup_{x \to \infty} \frac{\log \overline{F}_{X}(x)}{\log x}=-\I(X),
\end{equation}
where the function $\overline{F}_{X}$ is the tail function $1-F_X$ of random variable $X$ and $F_X$ is the distribution function of $X$. Here the convention $\log 0=-\infty$ is used. Using the terminology of \cite{bingham1989regular}, $-\I(X)$ corresponds to the upper order of the function $\overline{F}_{X}$.

Another quantity, which is closely related to the moment index, can be defined for a non-negative random variable $Y$ by the formula
\begin{equation}\label{lundbergdef}
\I^1(Y)=\sup\{s\geq 0: E(Y^s)\leq 1\}.
\end{equation}
We note that inequality $\I^1(Y)\leq \I(Y)$ holds between \eqref{momentindexdef} and \eqref{lundbergdef}. Quantities similar to \eqref{lundbergdef} are important in classical ruin theory as well as in the theory of large deviations. For a thorough treatment of the theory of large deviations the reader is advised to see \cite{dembo2009large}. In section \ref{sublund} below the quantity $\I^1(Y)$ is characterised via moment indices. 

\subsection{The aim of the study}

Our main goal is to discover the value of
\begin{equation}\label{promo1}
\I(\bar{Y})=-\limsup_{U_0 \to \infty} \frac{ \log P(T<\infty)}{\log U_0}
\end{equation}
using as few assumptions as possible. The range in which
\begin{equation}\label{promo2}
\I(\bar{Y}_n)=-\limsup_{U_0 \to \infty} \frac{ \log P(T\leq n)}{\log U_0}
\end{equation}
lies will also be discovered.

The quantities \eqref{promo1} and \eqref{promo2} describe the rough asymptotic behaviour of the probabilities $P(T<\infty)$ and $P(T\leq n)$. Unfortunately, in general, there are no guarantees that the limes superiors of \eqref{promo1} and \eqref{promo2} coincide with the corresponding limes inferiors. However, from the viewpoint of insurance, the upper estimate obtained from these limes superiors is actually the more important one. 

To see this, suppose the limes superior of \eqref{promo1} satisfies $\limsup_{U_0 \to \infty} \log P(T<\infty)/\log U_0=-\alpha$, where $\alpha \in (0,\infty)$. Now, for any $\epsilon>0$ the estimate $P(T<\infty)\leq U_0^{-\alpha+\epsilon}$ holds for all $U_0$ large enough. Since the upper bounds for ruin probabilities are the most important, limes superior offers sufficient knowledge for this application.

We will show which quantities determine the value of $\alpha$. It is natural to think that the risk determining the value of \eqref{promo1} is the dominating risk. In this sense our study reveals which risk dominates. It turns out that the value of $\alpha$ related to the unbounded interval is determined by the smaller of the quantities $\I^1(A)$ and $\I(B)$. Our approach utilises the monotonicity properties of the model. This enables us to cut ourselves loose from the usual, yet unneeded, technical assumptions such as $\I^1(A)<\I(A)$ or $\I^1(A)<\I(B)$.

The structure of the article is built to support the main results presented in Sections \ref{estimation}-\ref{mainproof}. Section \ref{prelims} includes the necessary preliminaries and develops the theory of moment indices. Section \ref{estimation} is dedicated to the different estimation methods that follow from the monotonicity of the model. Section \ref{chaz} clarifies the assumptions needed in the proof of main theorem presented in Sections \ref{finitecase} and  \ref{mainproof}. Finally, Section \ref{conclusions} contains comments and conclusions.

\subsection{Related studies}\label{relatedstudies}

The closest study to our purpose is \cite{Harri2001265}, where a similar problem is partially considered in Theorem 2. The present paper proves that the result concerning the limit supremum of ultimate ruin holds in a more general setting.

There exists a multitude of papers that study the model \eqref{modeldesc} when further assumptions on the distributions of $A$ and $B$ are made. Studies \cite{Tang2003299} and \cite{randomsum} use the model \eqref{modeldesc}. However, these papers concentrate on sharp asymptotic results in a model where more detailed assumptions are made on the distributions of random variables.

The connection of limiting distributions and stochastic fixed point equations is used in most of the cited articles. However, to our knowledge these equations have not been combined with moment indices. A recent article \cite{nyrhinen2011stochastic} considers stochastic equations satisfied by the random variables $\bar{Y}$ and $\bar{Y}_n$. In this paper we will use these connections to find a representation for $\I(\bar{Y})$ and $\I(\bar{Y}_n)$.

Our result is about the maximal moment of random variable $\bar{Y}$. A similar viewpoint has been studied in  \cite{alsmeyer2009distributional} where the absolute moments of process \eqref{infinitesumvariable} are explored. The article \cite{alsmeyer2009distributional} raises a question concerning the moment properties of $\bar{Y}$. In the present paper this question is answered in the case of heavy tailed risks. 

It is possible to use the connection 
\begin{equation}\label{inftysup}
P(\bar{Y}>u)\geq P(Y_\infty>u)\geq P(Y_\infty>0)P(\bar{Y}>u)
\end{equation} 
mentioned in remark 2.1 of \cite{nyrhinen2011stochastic} to see that $\I(\Bar{Y})=\I(Y_\infty)$, when the series $Y_\infty$ converges and $P(Y_\infty>0)>0$. The exact conditions that ensure the almost sure existence of the limit variable \eqref{infinitesumvariable} can be found from \cite{goldie1991implicit} and \cite{goldie2000stability}. For convergence in IID-case it suffices that $E(\log^+ |B|)<\infty$ and $E( \log A)<0$ by  \cite{embrechts1997modelling}, Section 8.4. 

Outside the field of insurance, similar problems are encountered in statistical ARCH and GARCH models, as briefly mentioned in Section \ref{notmot}, as well as in queuing theory. This aspect is studied in \cite{collamore2009random}, where dependence assumptions are further relaxed by introducing a Markovian dependence structure and using techniques developed in \cite{nummelin1984general}.

\section{Tools for analysis}\label{prelims}

\subsection{Stochastic equations}\label{stochdiff}

The process \eqref{modeldesc} has many comfortable properties in terms of distributional equations. More precisely, random variables \eqref{ultimatevariable} and \eqref{finitevariable} satisfy useful random difference and fixed point equations.

Consider a random variable defined at discrete time points $n$ recursively by the formula
\begin{equation}
V_0=0, \ V_n\stackrel{d}{=}B_n+A_n V_{n-1}^+, \ n\geq 1,
\end{equation}
where the random vector $(A_n,B_n)$ is independent of $V_{n-1}$. Since
\begin{eqnarray*}
\bar{Y}_n&=&\max(B_1,B_1+A_1B_2,\ldots,B_1+A_1B_2+\ldots+A_1\ldots A_{n-1} B_n) \\
&=&B_1+A_1\max(0,B_2,\ldots,B_2+A_2B_3+\ldots+A_1\ldots A_{n-1} B_n),
\end{eqnarray*}
it is easy to see that $\bar{Y}_n \stackrel{d}{=}V_n$ for all $n \in \mathbb{N}$. This connection allows us to calculate some of the moment indices recursively. 

In following formulas \eqref{yhtalo1} and \eqref{yhtalo2} the vector $(A,B)$ is independent of $\bar{Y}$ or $Y_\infty$ on the right hand side. It is known from \cite{nyrhinen2011stochastic} that
\begin{equation}\label{yhtalo1}
\bar{Y}\stackrel{d}{=}B+A\bar{Y}^+.
\end{equation}
In addition, the random variable $Y_\infty$ representing the perpetuity is well defined under mild conditions, as mentioned in Section \ref{relatedstudies}. In this case $Y_\infty$ satisfies a simple equation
\begin{equation}\label{yhtalo2}
Y_\infty \stackrel{d}{=}B+AY_\infty.
\end{equation}
Equation \eqref{yhtalo2} is often useful because under general assumptions the equality 
$$\I(Y_\infty)=\I(\bar{Y})$$ holds via Equation \eqref{inftysup}.

\subsection{Properties of moment indices}

Apart from \cite{daley2001moment} and \cite{daley2006moment}, sources for the use and properties of moment indices have been scarce. Hence, we take the opportunity to present some general results.

Much of the following deduction is based on the fact that random variables with the same distributions share a common moment index. This result can be seen directly.

\begin{lemma}\label{yhteyslause} Let $X$ and $Y$  be random variables such that $X \stackrel{d}{=} Y$. Then
\begin{equation*}
\I(X)=\I(Y). 
\end{equation*}
\qed
\end{lemma}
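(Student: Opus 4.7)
The plan is to unpack the definition of the moment index and reduce the statement to the basic fact that distributional equality is preserved under composition with Borel measurable functions and is the only ingredient needed for expectations to agree.

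First I would observe that the map $x \mapsto x^+ = \max(0,x)$ is a Borel measurable function from $\R$ to $\R$, and for each fixed $s \geq 0$ the map $y \mapsto y^s$ is Borel measurable on $[0,\infty)$. Consequently, from $X \stackrel{d}{=} Y$ it follows at once that $(X^+)^s \stackrel{d}{=} (Y^+)^s$ for every $s \geq 0$. Since expectations of nonnegative random variables depend only on the distribution, I would then conclude that $E((X^+)^s) = E((Y^+)^s) \in [0,\infty]$ for every $s \geq 0$; in particular one expectation is finite if and only if the other is.

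Next I would apply the definition \eqref{momentindexdef}. The sets
\begin{equation*}
S_X = \{s \geq 0 : E((X^+)^s) < \infty\}, \qquad S_Y = \{s \geq 0 : E((Y^+)^s) < \infty\}
\end{equation*}
coincide by the previous step, and therefore their suprema agree, giving $\I(X) = \sup S_X = \sup S_Y = \I(Y)$.

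There is no real obstacle here; the only thing to be slightly careful about is making sure the argument covers the degenerate cases $\I(X) = 0$ and $\I(X) = \infty$, which is automatic because the equality of the sets $S_X$ and $S_Y$ forces equality of their suprema in $[0,\infty]$ regardless of whether those sets are $\{0\}$, bounded, or all of $[0,\infty)$.
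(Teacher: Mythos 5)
Your proof is correct and is exactly the routine argument the paper has in mind: the paper states this lemma with no proof at all (just a \qed), remarking beforehand that the result "can be seen directly." Your writeup simply spells out the standard steps — distributional equality is preserved by the Borel map $x \mapsto (x^+)^s$, expectations of nonnegative random variables are distribution-determined, hence the finiteness sets and their suprema coincide — so there is no divergence in approach to report.
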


It is known from \cite{Tang2003299} that $\I(X+Y)=\I(XY)=\min(\I(X),\I(Y))$, if the random variables $X$ and $Y$ are independent and one of them is non-negative (but not identically $0$ in the case of product). We will develop similar results under more lenient assumptions. Especially assumptions on independence are alleviated. This is important when one considers vectors, whose components are not independent. 

We begin with a list of the most basic properties of moment indices. 

\begin{lemma}\label{lin}
Let $X$ and $Y$ be random variables such that $Y \leq X$ almost surely. Assume $a>0$ and $b \in \R$. Then
\begin{enumerate}
\item \label{kohta1}
\begin{equation*}
\I(aX+b)=\I(X)
\end{equation*}
\item \label{kohta2}
\begin{equation*}
\I((X^+)^a)=\frac{\I(X)}{a}
\end{equation*}
\item \label{kohta3}
\begin{equation*}
\I(X)\leq\I(Y)
\end{equation*}
\item \label{kohta4}
\begin{equation*}
\I(X\mathbf{1}(X>b))=\I(X).
\end{equation*}
\end{enumerate}
\end{lemma}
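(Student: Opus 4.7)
My plan is to handle each of the four items by working directly from the definition $\I(X) = \sup\{s \geq 0 : E((X^+)^s) < \infty\}$, leaning on two elementary pointwise facts: the monotonicity $x \leq y \Rightarrow x^+ \leq y^+$, and the sub-additivity bound $(u+v)^s \leq 2^{\max(s-1,0)}(u^s + v^s)$ valid for $u,v \geq 0$. Items 2 and 3 are immediate. For 2, the random variable $(X^+)^a$ is already non-negative so $((X^+)^a)^+ = (X^+)^a$, and hence $E(((X^+)^a)^s) = E((X^+)^{as})$; the substitution $t = as$ in the defining supremum gives $\I((X^+)^a) = \I(X)/a$. For 3, the almost sure inequality $Y \leq X$ yields $Y^+ \leq X^+$ pointwise, so $(Y^+)^s \leq (X^+)^s$ and monotonicity of expectation delivers $\I(X) \leq \I(Y)$.

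For item 1, I would first verify the pointwise bound $(aX+b)^+ \leq aX^+ + b^+$: on $\{aX+b>0\}$ one has $aX+b \leq aX^+ + b^+$ using $a > 0$, while on the complement the left side vanishes. Combined with the sub-additivity estimate, this shows that $E((X^+)^s) < \infty$ forces $E(((aX+b)^+)^s) < \infty$, hence $\I(aX+b) \geq \I(X)$. Applying the same reasoning to the inverse linear transformation $X = a^{-1}(aX+b) - b/a$ yields the reverse inequality and thus equality.

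Finally, for item 4, I split on the sign of $b$. If $b < 0$, a short case analysis on whether $X > 0$ or $X \leq 0$ shows that $(X\mathbf{1}(X>b))^+ = X^+$ identically, so both moment indices are trivially equal. If $b \geq 0$, I would establish the two pointwise bounds
\[
(X\mathbf{1}(X>b))^+ \leq X^+ \leq b + (X\mathbf{1}(X>b))^+,
\]
the first by monotonicity and the second by splitting $\{X > 0\}$ according to whether $X \leq b$ or $X > b$. Raising these to the power $s$ via the sub-additivity estimate and taking expectations makes $E((X^+)^s)$ and $E(((X\mathbf{1}(X>b))^+)^s)$ simultaneously finite, yielding equality of moment indices. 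The only step needing any real care is this case split in item 4; everything else reduces to an elementary pointwise bound followed by a monotone application of the expectation.
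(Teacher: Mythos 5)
Your proof is correct, but it takes a genuinely different route from the paper for items 1 and 4. The paper proves item 1 by invoking the tail-behaviour characterisation \eqref{tailconnection} (Daley's Lemma BDK), namely $\limsup_{x\to\infty}\log\overline{F}_X(x)/\log x=-\I(X)$, and then observing that shifting and scaling the argument of the tail changes $\log(ay+b)$ into something asymptotically equivalent to $\log y$; item 4 is similarly dispatched as ``clear by \eqref{tailconnection}'' since truncating $X$ below level $b$ does not change its right tail. Your argument instead stays at the level of the definition \eqref{momentindexdef}, deriving the pointwise sandwich $(aX+b)^+\leq aX^++b^+$ (and its analogue for the truncation), controlling the resulting sum with the elementary estimate $(u+v)^s\leq 2^{\max(s-1,0)}(u^s+v^s)$, and then closing the loop by applying the same one-sided inequality to the inverse linear map. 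What the paper's route buys is brevity and the ability to reuse one lemma; what your route buys is self-containment---it never appeals to \eqref{tailconnection}, so it would still work in a treatment that had not yet established that connection---and it makes the role of the convexity of $s\mapsto E((X^+)^s)$ completely explicit. For items 2 and 3 the paper declares them ``obvious from the definition,'' and your two-line verifications are exactly the expected unpacking, so there is no real divergence there. All the pointwise inequalities you state check out, including the case split on the sign of $b$ in item 4, and the passage from the one-sided bound to equality in item 1 via $X=a^{-1}(aX+b)-b/a$ is sound.
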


\begin{proof}
Part \ref{kohta1} is clear, while parts \ref{kohta2} and \ref{kohta3} follow from the definition of moment index \eqref{momentindexdef}. Part \ref{kohta4} is obvious by \eqref{tailconnection}.
\end{proof}

The following lemma presents a general property of moment index.
\begin{lemma}\label{maxlemma}
Let $X$ and $Y$ be random variables. Then
\begin{equation*}
\I(\max(X,Y))=\min(\I(X),\I(Y)).
\end{equation*}
\end{lemma}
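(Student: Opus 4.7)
The plan is to reduce the statement to a simple statement about the finiteness of moments. First, I would observe the pointwise identity
\begin{equation*}
\max(X,Y)^+=\max(X^+,Y^+),
\end{equation*}
which holds because if both $X$ and $Y$ are non-positive then so is their maximum (both sides equal $0$), and otherwise the non-negative one dictates the maximum on both sides.

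Next, I would exploit the elementary two-sided bound
\begin{equation*}
\max(a,b)^s \;\leq\; a^s+b^s \;\leq\; 2\max(a,b)^s \quad\text{for } a,b\geq 0,\ s\geq 0.
\end{equation*}
Applying this to $a=X^+$ and $b=Y^+$ and taking expectations gives
\begin{equation*}
E\bigl(\max(X,Y)^+\bigr)^s \;\leq\; E(X^+)^s+E(Y^+)^s \;\leq\; 2\,E\bigl(\max(X,Y)^+\bigr)^s.
\end{equation*}
Consequently, $E(\max(X,Y)^+)^s<\infty$ if and only if both $E(X^+)^s<\infty$ and $E(Y^+)^s<\infty$.

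Finally, I would recall that the set $\{s\geq 0:E(Z^+)^s<\infty\}$ is of the form $[0,\I(Z))$ or $[0,\I(Z)]$ (a standard consequence of the convexity of $s\mapsto E(Z^+)^s$, or of the bound $(Z^+)^t\leq 1+(Z^+)^s$ for $0\leq t\leq s$). Since these sets are down-closed intervals, the intersection associated with $\max(X,Y)$ has supremum equal to $\min(\I(X),\I(Y))$, which gives $\I(\max(X,Y))=\min(\I(X),\I(Y))$. Alternatively, one can apply connection \eqref{tailconnection} to $\overline{F}_{\max(X,Y)}$ using $\max(\overline{F}_X,\overline{F}_Y)\leq \overline{F}_{\max(X,Y)}\leq \overline{F}_X+\overline{F}_Y\leq 2\max(\overline{F}_X,\overline{F}_Y)$, noting that the $\log 2$ term vanishes after dividing by $\log x$, and that $\limsup$ commutes with finite $\max$. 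No real obstacle is expected; the only point requiring a moment's thought is verifying the identity $\max(X,Y)^+=\max(X^+,Y^+)$ and justifying the passage from ``sup of intersection'' to ``minimum of sups,'' both of which are elementary.
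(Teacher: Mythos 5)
Your proof is correct. Both you and the paper establish $\I(\max(X,Y))\geq\min(\I(X),\I(Y))$ by essentially the same moment comparison (the paper's indicator decomposition is equivalent to your left-hand inequality $\max(a,b)^s\leq a^s+b^s$). For the reverse inequality $\I(\max(X,Y))\leq\min(\I(X),\I(Y))$, however, the paper switches to the tail characterisation \eqref{tailconnection}, using $P(\max(X,Y)>x)\geq\max(P(X>x),P(Y>x))$ and taking $\limsup$. You instead stay in moment language: the right-hand inequality $a^s+b^s\leq 2\max(a,b)^s$ (or, even more directly, $X^+\leq\max(X,Y)^+$ pointwise) shows that finiteness of $E\bigl((\max(X,Y)^+)^s\bigr)$ forces finiteness of both $E((X^+)^s)$ and $E((Y^+)^s)$. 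Combined with the identity $\max(X,Y)^+=\max(X^+,Y^+)$ and the down-closedness of the finiteness set, this gives a self-contained proof that never invokes the tail-function representation. The two routes are of comparable length; the paper's is natural given that \eqref{tailconnection} is used repeatedly elsewhere, while yours is slightly more unified in that both directions fall out of a single two-sided pointwise bound. One small caution: convexity of $s\mapsto E((Z^+)^s)$ alone does not imply the finiteness set is a down-closed interval (a convex function can be finite only on a middle interval); the direct bound $(Z^+)^t\leq 1+(Z^+)^s$ for $0\leq t\leq s$, which you also supply, is the argument that actually does the work, so you should lead with that rather than the appeal to convexity.
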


\begin{proof} Clear by part \ref{kohta3} of Lemma \ref{lin}. 
\end{proof}

The next lemma states two important rules of moment indices. They are partially known from \cite{Tang2003299}.

\begin{lemma}\label{tokalemma}
Let $X$ and $Y$ be random variables. Then
\begin{equation}\label{zemma1}
\I(X+Y)\geq \min(\I(X),\I(Y)).
\end{equation}
Furthermore, if $X$ and $Y$ are independent or non-negative,
\begin{equation*}
\I(X+Y)= \min(\I(X),\I(Y)).
\end{equation*}
\end{lemma}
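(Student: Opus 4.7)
My plan is to establish the three claims in turn, leveraging the lemmas already proved in this section.

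For the general inequality $\I(X+Y)\geq \min(\I(X),\I(Y))$, I would argue by monotonicity. The pointwise bound $X+Y\leq 2\max(X,Y)$ holds for any two real numbers, so Lemma \ref{lin}\ref{kohta3} yields $\I(X+Y)\geq \I(2\max(X,Y))$. By Lemma \ref{lin}\ref{kohta1} the factor $2$ can be absorbed, and then Lemma \ref{maxlemma} gives $\I(\max(X,Y))=\min(\I(X),\I(Y))$, which finishes the lower bound.

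For the equality in the non-negative case, the matching upper bound is immediate: since $X,Y\geq 0$, we have $X\leq X+Y$ and $Y\leq X+Y$ almost surely, so Lemma \ref{lin}\ref{kohta3} gives $\I(X+Y)\leq \I(X)$ and $\I(X+Y)\leq \I(Y)$, hence $\I(X+Y)\leq \min(\I(X),\I(Y))$. Combined with the lower bound just established, we get equality.

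For the equality in the independent case, I would work via the tail characterisation \eqref{tailconnection}. Pick any constant $a$ such that $P(Y>-a)>0$; this is possible since $P(Y>-a)\to 1$ as $a\to\infty$. By independence,
\begin{equation*}
P(X+Y>x)\geq P(X>x+a)\,P(Y>-a).
\end{equation*}
Taking logarithms, dividing by $\log x$, and using that $\log(x+a)/\log x \to 1$ as $x\to\infty$, the tail connection \eqref{tailconnection} gives
\begin{equation*}
-\I(X+Y)=\limsup_{x\to\infty}\frac{\log P(X+Y>x)}{\log x}\geq \limsup_{x\to\infty}\frac{\log P(X>x+a)}{\log x}=-\I(X),
\end{equation*}
so $\I(X+Y)\leq \I(X)$. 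By symmetry $\I(X+Y)\leq \I(Y)$, yielding the matching upper bound. The main subtlety here is just making sure the additive shift $a$ inside the tail is harmless in the $\log$-ratio limit, but this is a routine computation. The rest of the argument is essentially a reapplication of the tools already developed earlier in the section.
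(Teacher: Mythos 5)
Your proof is correct. For the general inequality \eqref{zemma1} and for the non-negative case you take essentially the same route as the paper, just packaged explicitly through Lemmas \ref{lin} and \ref{maxlemma} rather than through the direct expectation estimates the paper writes out; the underlying pointwise inequalities $X+Y\leq 2\max(X,Y)$ and $X,\,Y\leq X+Y$ are identical in both treatments.

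For the independent case your route diverges slightly from the paper's: you argue on tails through \eqref{tailconnection}, while the paper argues directly on moments. The paper fixes $b$ with $P(X>b)>0$, bounds $E(((X+Y)^+)^s)\geq E(((b+Y)^+)^s)P(X>b)$ by independence, and concludes from the shift invariance $\I(Y+b)=\I(Y)$ of Lemma \ref{lin}\ref{kohta1}. You instead fix $a$ with $P(Y>-a)>0$, bound $P(X+Y>x)\geq P(X>x+a)P(Y>-a)$, and absorb the additive shift $a$ in the $\log$-ratio limit. The two arguments are structurally parallel --- condition on one variable lying in a fixed event of positive probability, factor by independence, then absorb an additive shift --- and the paper's appeal to Lemma \ref{lin}\ref{kohta1} itself rests on the same $\log$-ratio computation you perform in the open. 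So the difference is one of packaging rather than substance; both are sound, and the tail route is arguably more transparent given that \eqref{tailconnection} is already on the table.
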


\begin{proof}
If $\min(\I(X),\I(Y))=0$ there is nothing to prove in \eqref{zemma1}. Assume that there exists $0< s<\min(\I(X),\I(Y))$. Since
\begin{eqnarray*}
E((X+Y)^s \mathbf{1}(X+Y>0)) &\leq & E((2\max(X,Y))^s\mathbf{1}(X+Y>0)) \\
& \leq & 2^s E((\max(X,Y))^s\mathbf{1}(\max(X,Y)>0))<\infty, 
\end{eqnarray*}
it must hold by Lemma \ref{maxlemma} that $\I(X+Y)\geq \I(\max(X,Y))$.

Assume then that the random variables $X$ and $Y$ are independent. As the other direction follows directly from Equation \eqref{zemma1} it suffices to prove the inequality $\I(X+Y)\leq \min(\I(X),\I(Y))$. To do this, we begin by choosing a real number $b$ such that $P(X>b)>0$. Now, for any $0<s<\I(X+Y)$, we get
\begin{eqnarray*}
E(((X+Y)^+)^s) & \geq & E(((X+Y)^+)^s\mathbf{1}(X>b)) \\
& \geq &  E(((b+Y)^+)^s\mathbf{1}(X>b)) \\
& = & E(((b+Y)^+)^s)P(X>b). 
\end{eqnarray*}
By Lemma \ref{lin} part \ref{kohta1} $\I(Y+b)=\I(Y)$ holds and we see that $\I(X+Y)\leq \I(Y).$ Using the symmetry of the random variables $X$ and $Y$ we also obtain $\I(X+Y)\leq \I(X)$. Therefore $\I(X+Y)\leq \min(\I(X),\I(Y))$.

Assume finally that random variables $X$ and $Y$ are non-negative. Now the events $\{X\geq 0\}$, $\{Y\geq 0\}$ and $\{X+Y\geq 0\}$ are almost sure events. Hence
\begin{eqnarray*}
E((X+Y)^s\mathbf{1}(X+Y>0))&=&E((X+Y)^s) \\
&\geq & \max(E(X^s),E(Y^s)) \\
&=&\max(E((X\mathbf{1}(X>0))^s),E((Y\mathbf{1}(Y>0))^s)),
\end{eqnarray*}
which proves the claim. 
\end{proof}

It is useful to notice the following property of moment indices and the related corollary.

\begin{lemma}\label{separation} Let $X$ be a random variable on probability space $(\Omega,\mathcal{F},P)$ and $S \in \mathcal{F}$. Then
\begin{equation*}
\I(X)=\min(\I(X \mathbf{1}_S),\I(X \mathbf{1}_{S^c})).
\end{equation*}
\end{lemma}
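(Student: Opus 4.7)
My plan is to reduce the statement to the non-negative case of Lemma \ref{tokalemma} by passing to positive parts. The key pointwise identity I will use is
\begin{equation*}
X^+ = (X\mathbf{1}_S)^+ + (X\mathbf{1}_{S^c})^+,
\end{equation*}
which holds because the two summands on the right have disjoint supports: on $S$ the second summand vanishes and $(X\mathbf{1}_S)^+ = X^+$, and symmetrically on $S^c$. This identity turns the problem about $X$ into a problem about a sum of two non-negative random variables.

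Next, I will invoke Lemma \ref{lin} part \ref{kohta2} with $a=1$ to observe that passing to the positive part does not change the moment index, so $\I(X)=\I(X^+)$ and likewise $\I(X\mathbf{1}_S) = \I((X\mathbf{1}_S)^+)$ and $\I(X\mathbf{1}_{S^c}) = \I((X\mathbf{1}_{S^c})^+)$. Applying the non-negative half of Lemma \ref{tokalemma} to the sum $(X\mathbf{1}_S)^+ + (X\mathbf{1}_{S^c})^+$ then yields
\begin{equation*}
\I(X) = \I(X^+) = \min\bigl(\I((X\mathbf{1}_S)^+),\,\I((X\mathbf{1}_{S^c})^+)\bigr) = \min\bigl(\I(X\mathbf{1}_S),\,\I(X\mathbf{1}_{S^c})\bigr),
\end{equation*}
which is the desired equality.

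There is essentially no obstacle; the only thing to verify carefully is the pointwise identity for $X^+$, which is a short case split on whether $\omega \in S$ or $\omega \in S^c$. Alternatively, one could give a direct one-line proof by writing $E((X^+)^s) = E((X^+)^s\mathbf{1}_S) + E((X^+)^s\mathbf{1}_{S^c}) = E(((X\mathbf{1}_S)^+)^s) + E(((X\mathbf{1}_{S^c})^+)^s)$ for every $s>0$ and reading off both inequalities from the definition \eqref{momentindexdef}, but the route through Lemma \ref{tokalemma} has the advantage of re-using machinery already established in this section.
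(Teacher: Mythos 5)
Your proof is correct. The pointwise identity $X^+ = (X\mathbf{1}_S)^+ + (X\mathbf{1}_{S^c})^+$ holds (short case split on $S$ versus $S^c$), passing to positive parts preserves the moment index by Lemma~\ref{lin}~\ref{kohta2}, and the non-negative case of Lemma~\ref{tokalemma} then closes the argument immediately. Your route is genuinely different from the paper's, though closely related in spirit: the paper decomposes $X^s\mathbf{1}(X>0) = X^s\mathbf{1}(X>0)\mathbf{1}_S + X^s\mathbf{1}(X>0)\mathbf{1}_{S^c}$, sandwiches the sum between $\max$ and $2\max$ of the two summands, and then invokes Lemma~\ref{maxlemma}; you instead package the two pieces as non-negative random variables and invoke the sum-rule (Lemma~\ref{tokalemma}) directly, which hides the constant-$2$ estimate inside a lemma already established. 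Either lemma does the job, since for non-negative $U,V$ the bound $\max(U,V) \leq U+V \leq 2\max(U,V)$ makes the max-rule and the non-negative sum-rule interchangeable. Your parenthetical one-liner — expanding $E((X^+)^s) = E(((X\mathbf{1}_S)^+)^s) + E(((X\mathbf{1}_{S^c})^+)^s)$ and reading off both inequalities from the definition — is, if anything, the cleanest of the three arguments and needs no auxiliary lemma at all, only the observation that a finite sum of non-negative terms is finite iff each term is.
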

\begin{proof} Using the decomposition
\begin{equation*}
X^s\mathbf{1}(X>0)=X^s\mathbf{1}(X>0)\mathbf{1}_S+X^s\mathbf{1}(X>0)\mathbf{1}_{S^c}
\end{equation*}
and positivity we may estimate
\begin{equation}\label{eka}
\max[E(X^s\mathbf{1}(X>0)\mathbf{1}_S),E(X^s\mathbf{1}(X>0)\mathbf{1}_{S^c})]\leq E(X^s\mathbf{1}(X>0)).
\end{equation}
By Equation \eqref{eka} we see that $\I(X)\leq \min(\I(X \mathbf{1}_S),\I(X \mathbf{1}_{S^c}))$. The other direction is immediately valid because of \eqref{zemma1}.  
\end{proof}

As a direct consequence of Lemma \ref{separation} we obtain the following corollary.
 
\begin{corollary}\label{jakokorollaari}
Let $\Theta$ be a finite cover of the state space $\Omega$. Then by Lemma \ref{separation}
\begin{equation*}
\I(X)=\min_{\theta \in \Theta} \I(X\mathbf{1}_\theta).
\end{equation*} 
\end{corollary}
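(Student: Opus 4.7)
The plan is to establish the corollary by induction on $n=|\Theta|$, iterating Lemma \ref{separation} and handling the fact that $\Theta$ is merely a cover (not necessarily a partition) by passing to its canonical disjoint refinement.

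First I would form the partition $\eta_1=\theta_1$ and $\eta_k=\theta_k\setminus(\theta_1\cup\cdots\cup\theta_{k-1})$ for $2\leq k\leq n$, so that $\{\eta_1,\ldots,\eta_n\}$ partitions $\Omega$ and $\eta_k\subseteq\theta_k$ for each $k$. The inclusion $\eta_k\subseteq\theta_k$ gives
\begin{equation*}
(X\mathbf{1}_{\eta_k})^+ = X^+\mathbf{1}_{\eta_k} \leq X^+\mathbf{1}_{\theta_k} = (X\mathbf{1}_{\theta_k})^+
\end{equation*}
pointwise, so from the definition \eqref{momentindexdef} one reads off $\I(X\mathbf{1}_{\eta_k})\geq\I(X\mathbf{1}_{\theta_k})$ for every $k$, and hence
\begin{equation*}
\min_{1\leq k\leq n}\I(X\mathbf{1}_{\eta_k}) \geq \min_{1\leq k\leq n}\I(X\mathbf{1}_{\theta_k}).
\end{equation*}

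Next I would prove the partition identity $\I(X)=\min_k\I(X\mathbf{1}_{\eta_k})$ by finite induction on $n$; the case $n=2$ is exactly Lemma \ref{separation}. For the inductive step, apply Lemma \ref{separation} with $S=\eta_1$ to get $\I(X)=\min(\I(X\mathbf{1}_{\eta_1}),\I(X\mathbf{1}_{\eta_1^c}))$. Because the $\eta_k$ are pairwise disjoint and $\eta_1^c=\eta_2\cup\cdots\cup\eta_n$, a second application of Lemma \ref{separation} to $X\mathbf{1}_{\eta_1^c}$ with $S=\eta_2$ uses $\mathbf{1}_{\eta_1^c}\mathbf{1}_{\eta_2}=\mathbf{1}_{\eta_2}$ to peel off $\I(X\mathbf{1}_{\eta_2})$; after $n-1$ such iterations all the $\I(X\mathbf{1}_{\eta_k})$ appear in the minimum. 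Combining this with the previous display yields $\I(X)\geq\min_k\I(X\mathbf{1}_{\theta_k})$, while the reverse inequality follows immediately from $(X\mathbf{1}_{\theta_k})^+\leq X^+$ pointwise, which gives $\I(X)\leq\I(X\mathbf{1}_{\theta_k})$ for every $k$.

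The only non-mechanical ingredient is the monotonicity argument that lets one trade the cover $\{\theta_k\}$ for its disjoint refinement $\{\eta_k\}$ without disturbing the minimum; the iteration of Lemma \ref{separation} itself is pure bookkeeping, which is consistent with the author's description of the corollary as a direct consequence of Lemma \ref{separation}.
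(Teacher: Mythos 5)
Your proof is correct. The paper gives no explicit argument — it simply calls the corollary a direct consequence of Lemma \ref{separation} — and your reconstruction (pass to the canonical disjoint refinement $\eta_k=\theta_k\setminus(\theta_1\cup\cdots\cup\theta_{k-1})$, iterate Lemma \ref{separation} to get the partition identity, then use $(X\mathbf{1}_{\eta_k})^+\leq(X\mathbf{1}_{\theta_k})^+\leq X^+$ for the two monotonicity inequalities that transfer the result from the partition back to the cover) is precisely the routine bookkeeping the author is alluding to; in fact the refinement step is what makes the statement for a general cover, rather than only a partition, go through.
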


Corollary \ref{jakokorollaari} motivates to define a new conditional quantity. If $X$ is a random variable and $H$ an event of the state space $\Omega$ with $P(H)>0$, we define \emph{the conditional moment index} $\I(X|H)$ of random variable $X$ by setting
\begin{equation*}
\I(X|H)=\sup\{s\geq 0 : E((X^+)^s|H)<\infty \}=\sup\{s\geq 0 : E((X^+)^s\mathbf{1}_H)<\infty \}.
\end{equation*}

We end the section with an example that shows how the stochastic fixed point equations can be used with moment indices.

\begin{example}\label{ex3} Consider the model \eqref{modeldesc} satisfying assumptions \ref{as3}-\ref{as2} of Section \ref{alkuassumptions}. Assume further that the sequence $(B_i)$ consists of non-negative random variables. Then we may deduce an upper bound for the moment index of random variable $\bar{Y}$. Namely,
\begin{equation}\label{ex2}
\I(\bar{Y})\leq \min(\I(A),\I(B)).
\end{equation}

Equation \eqref{ex2} is a direct consequence of the stochastic equation the random variable $\bar{Y}$ satisfies. We recall from \eqref{yhtalo1} that $\bar{Y}\stackrel{d}{=}B+A\bar{Y}^+$. By Lemmas \ref{yhteyslause} and \ref{tokalemma} we get 
\begin{equation}\label{ex22}
\I(\bar{Y})=\min(\I(A),\I(B),\I(\bar{Y})).
\end{equation}
Obviously \eqref{ex22} is equivalent to \eqref{ex2}.

Two remarks are in order. Firstly, a similar bound could be obtained along the same line of thought if we assumed that the sequence $(A_i)$ was independent of $(B_i)$. In this case insurance risks could be real valued random variables. Secondly, the bound obtained in \eqref{ex2} is not the best possible. In the case $\I^1(A)<\min(\I(A),\I(B))$ the best bound is $\I^1(A)$. This will become clear after the proof of the main theorem in Section \ref{mainproof}.
\end{example}

\section{Estimation methods}\label{estimation}

We will study how the process \eqref{modeldesc} changes when the original risks are replaced by almost surely smaller risks. The new risks, as well as all quantities related to them, are marked with the asterisk symbol $*$.

\subsection{Monotonicity of the financial risk}\label{monotonyoffinance}

Let us assume that $A$ is replaced by random variable $A^*$, for which $A\geq A^*>0$. Assume further that every other aspect of the model remains unchanged. Especially the insurance risk $B$ has the same marginal distribution as before.

We will examine the relationship between the ruin times $T$ and $T^*$. Recall that the capital at the end of the year $n$ is given by the random variable $U_n$ that satisfies the recursion \eqref{modelrec}. We will prove the inequality
\begin{equation}\label{Uarvio}
U_n\leq U_n^*
\end{equation}
for every $n$ before the time of ruin, when $A_n\geq A_n^*$, that is, $r_n\leq r_n^*$.

Formula \eqref{Uarvio} can be justified using the following argument. We note first that before the time of ruin $T$ each of the random variables $U_{n-1}-B_n$ is positive. If $T=1$, it follows that $U_1^*\leq U_1<0$ and therefore $T^*=T$. Now, if $T \geq 2$, we may use inductive reasoning for each point of the sample space.
\begin{enumerate}
\item If $T=k$, where $k\geq 2$, at time $n=1$ the inequality
\begin{equation*}
U_1=(1+r_1)(U_0-B_1)\leq (1+r_1^*)(U_0-B_1)=U_1^*,
\end{equation*}
holds and \eqref{Uarvio} is valid. 

\item Assume that \eqref{Uarvio} holds for all $1 \leq n \leq T-2$. Then 
\begin{eqnarray*}
U_{n+1}=(1+r_{n+1})(U_n-B_{n+1})& \leq &(1+r_{n+1}^*)(U_n-B_{n+1}) \\
& \leq & (1+r_{n+1}^*)(U_n^*-B_{n+1}).
\end{eqnarray*}
\end{enumerate}
The order of $U$ and $U^*$ cannot be deduced exactly at the time of ruin. However, it is possible to infer that the process $(U_n^*)$ cannot obtain its first negative value before the process $(U_n)$ has done so. Hence $T^*\geq T$, when $A\geq A^*$.

\subsubsection{Case $\I^1(A)=0$ by estimation of financial risk $A$}\label{arviokappale}

The result $T\leq T^*$ from Section \ref{monotonyoffinance} allows us to deduce 
\begin{equation*}
P(T_{U_0}<\infty)\geq P(T^*_{U_0}<\infty).
\end{equation*}
This in turn enables the estimate
\begin{equation}\label{hassujuttu1}
\limsup_{U_0 \to \infty} \frac{\log P(T_{U_0}<\infty)}{\log U_0}\geq \limsup_{U_0 \to \infty} \frac{\log P(T^*_{U_0}<\infty)}{\log U_0}.
\end{equation}
The following lemma shows that the needed random variable $A^*$ exists.

\begin{lemma}\label{aarviolemma} Let $A>0$ be a random variable such that $\I^1(A)<\infty$. Fix $\epsilon>0$. Then there exists a bounded random variable $A_\epsilon$ that fulfils the conditions
\begin{enumerate}
\item $A\geq A_\epsilon>0$ almost surely and \label{ekaehto}
\item $\I^1(A_\epsilon)=\I^1(A)+\epsilon$. \label{tokaehto}
\end{enumerate}
\end{lemma}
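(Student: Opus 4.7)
The plan is to construct $A_\epsilon$ as a \emph{scaled truncation} of $A$. Write $s := \I^1(A) + \epsilon$ and, for parameters $M > 0$ and $c \in (0,1]$ still to be chosen, set $A_{M,c} := c \cdot \min(A, M)$. Any such $A_{M,c}$ is automatically bounded (by $cM$), strictly positive (since $A > 0$), and pointwise dominated by $A$, so the boundedness requirement and condition \ref{ekaehto} hold for free. Everything reduces to picking $(M, c)$ in such a way that $\I^1(A_{M,c}) = s$.

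Since $s > \I^1(A)$, the definition \eqref{lundbergdef} yields $E(A^s) > 1$ (possibly $+\infty$). Writing $A_M := \min(A, M)$, monotone convergence gives $E(A_M^s) \uparrow E(A^s)$ as $M \to \infty$. Hence for every sufficiently large $M$ we have $E(A_M^s) > 1$, and choosing $c := E(A_M^s)^{-1/s} \in (0,1)$ produces $A_\epsilon := c A_M$ satisfying $E(A_\epsilon^s) = c^s E(A_M^s) = 1$ exactly.

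To conclude $\I^1(A_\epsilon) = s$, I would use convexity of $r \mapsto h(r) := E(A_\epsilon^r) = E(e^{r \log A_\epsilon})$, which satisfies $h(0) = 1 = h(s)$. Convexity forces $h(r) \leq 1$ for $r \in [0, s]$, giving $\I^1(A_\epsilon) \geq s$. Whenever $A_\epsilon$ is not almost surely constant the convexity is strict, which forces $h(r) > 1$ for $r > s$ and hence the matching upper bound $\I^1(A_\epsilon) \leq s$. Non-degeneracy of $A_\epsilon$ is equivalent to non-degeneracy of $A_M$, which holds as soon as $A$ is non-constant and $M > \essinf A$; this is easy to arrange together with the earlier condition $E(A_M^s) > 1$ by enlarging $M$ if necessary.

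The main obstacle is the single degenerate situation where $A$ itself is almost surely equal to a constant $a$. Assumption \ref{as1} rules out $a = 1$, and $\I^1(A) < \infty$ forces $a > 1$. In that case every scaled truncation of $A$ is again constant, so I would build $A_\epsilon$ by hand as a two-point random variable independent of $A$ and supported in $(0, a]$: pick $0 < x_1 < 1 < x_2 \leq a$ and probabilities $p, 1-p$ solving $x_1^s p + x_2^s (1 - p) = 1$, which has a unique solution $p \in (0, 1)$ for every $s > 0$. A direct convexity argument on the two-point moment generating function then gives $\I^1(A_\epsilon) = s$, completing the case analysis.
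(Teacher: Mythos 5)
Your construction is correct, but it follows a genuinely different and arguably cleaner route than the paper. The paper builds $A_\epsilon$ in three regions: it keeps $A$ on $\{A \le M_2\}$, replaces $A$ by a small constant $M_3$ on $\{A > M_1\}$, and fills the middle band $\{M_2 \le A \le M_1\}$ with a simple function $G$ tuned so that $E(A_\epsilon^{\I^1(A)+\epsilon}) = 1$. Your truncate-and-scale recipe $A_\epsilon = c\min(A,M)$, with $M$ chosen so $E(\min(A,M)^s) > 1$ and $c = E(\min(A,M)^s)^{-1/s}$, collapses this into a single explicit formula and avoids invoking the intermediate-value machinery for choosing $G$. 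You are also more explicit than the paper about the step that is actually doing the work: passing from $E(A_\epsilon^s)=1$ to $\I^1(A_\epsilon)=s$ requires the (strict) convexity of $r \mapsto E(A_\epsilon^r)$ with the pinning $h(0)=h(s)=1$, plus a non-degeneracy check on $A_\epsilon$; the paper simply declares the conclusion ``obvious.'' Finally, you single out the degenerate case where $A$ is almost surely constant, which indeed breaks any deterministic truncation; your two-point replacement works, with the usual caveat (shared equally by the paper's middle-region simple function $G$) that the underlying probability space must be rich enough to carry a non-constant random variable dominated by $A$. One minor point of phrasing: the precise condition for strict convexity of $h(r)=E(A_\epsilon^r)$ is $P(A_\epsilon\ne 1)>0$ rather than ``not a.s.\ constant,'' though in your setup the constraint $E(A_\epsilon^s)=1$ with $s>0$ makes these two conditions coincide.
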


\begin{proof} Define
$$m_\epsilon=\sup\left\{m: E\left( A^{\mathbb{I}^1(A)+\epsilon} \mathbf{1}(A\leq m) \right)\leq 1 \right\}.$$
In case $P(A=m_\epsilon)=0$, it must hold that $E\left( A^{\mathbb{I}^1(A)+\epsilon} \mathbf{1}(A\leq m_\epsilon) \right)=1$. For this case we define 
$$A'=A \mathbf{1}(A \leq m_\epsilon). $$
In case $P(A=m_\epsilon)>0$ and $E\left( A^{\mathbb{I}^1(A)+\epsilon} \mathbf{1}(A\leq m_\epsilon) \right)>1$, then for a suitably chosen constant $c_1 \in [0,m_\epsilon)$ it holds that $E\left( A^{\mathbb{I}^1(A)+\epsilon} \mathbf{1}(A< m_\epsilon) \right)+ c_1^{\mathbb{I}^1(A)+\epsilon} P(A=m_\epsilon)=1$. Then we define 
$$A'=A \mathbf{1}(A<m_\epsilon)+c_1 \mathbf{1}(A=m_\epsilon). $$
Let $c_2>0$ be so small that the random variable
$$\hat{A}=A'+c_2\mathbf{1}(A>m_\epsilon) $$
satisfies $\I^1(\hat{A})\in (\mathbb{I}^1(A)+\epsilon/2,\mathbb{I}^1(A)+\epsilon)$. Setting $A_\epsilon=c_3\hat{A}$ for a suitable constant $c_3\in (0,1)$ ends the construction. 
\end{proof}

\subsection{Monotonicity of insurance risk}

Insurance risks have a similar monotonicity property as financial risks. This is clear, since from $B^*\leq B$ it follows that $Y_k^*\leq Y_k$ for all $k$. Hence $\bar{Y}^*\leq \bar{Y}$ and $P(T_{U_0}<\infty)\geq P(T_{U_0}^*<\infty)$.

The next result demonstrates how the moment index of a random variable can be increased by estimation from below.

\begin{lemma}\label{omamoma}
Let $B$ be a real valued random variable. Assume $\I(B)=\alpha<\infty$ and $\beta>\alpha$.

Then there exists a random variable $B^*$ such that almost surely $B^*\leq B$ and $\I(B^*)=\beta$.
\end{lemma}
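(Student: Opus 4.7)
The guiding idea is that pushing $B$ down on the set where it is large lightens the right tail and therefore raises the moment index; the construction should be explicit enough that the new moment index can be computed precisely. Assume first $\alpha > 0$, set $\gamma = \alpha/\beta \in (0,1)$, and define
\[
B^*(\omega) = B(\omega)\,\mathbf{1}(B(\omega) \leq 1) + B(\omega)^{\gamma}\,\mathbf{1}(B(\omega) > 1).
\]
Since $x^\gamma < x$ for every $x > 1$, the inequality $B^* \leq B$ holds pointwise (on $\{B \leq 1\}$ there is equality).

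Next I would verify $\I(B^*) = \beta$ by direct computation of $E((B^{*+})^s)$. Split according to the sign structure of $B$: on $\{B \leq 0\}$ one has $(B^*)^+ = 0$; on $\{0 < B \leq 1\}$ one has $(B^*)^+ = B$; and on $\{B > 1\}$ one has $(B^*)^+ = B^\gamma$. Therefore, for any $s > 0$,
\[
E\bigl((B^{*+})^s\bigr) = E\bigl(B^s\,\mathbf{1}(0 < B \leq 1)\bigr) + E\bigl(B^{\gamma s}\,\mathbf{1}(B > 1)\bigr).
\]
The first summand is bounded by $P(0 < B \leq 1)$. For the second, $E(B^{\gamma s}\,\mathbf{1}(B > 1))$ differs from $E((B^+)^{\gamma s})$ only by a term bounded by $1$, so both are finite or infinite together. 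By the definition of $\I(B) = \alpha$, this happens precisely when $\gamma s < \alpha$, and the substitution $\gamma = \alpha/\beta$ converts that condition into $s < \beta$. Hence $\I(B^*) = \beta$.

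The remaining case $\alpha = 0$ is genuinely different because the exponent $\gamma$ collapses. Here I would observe that $\I(B) < \infty$ forces $P(B > y) > 0$ for every $y$, enlarge the probability space to carry a uniform random variable $U$ on $(0,1)$ independent of $B$, and set $B^* = \min(B,\,U^{-1/\beta})$. Then $B^* \leq B$ almost surely, and by independence $\bar{F}_{B^*}(y) = \bar{F}_B(y)\,y^{-\beta}$ for $y \geq 1$, so
\[
\limsup_{y \to \infty} \frac{\log \bar{F}_{B^*}(y)}{\log y} = \limsup_{y \to \infty} \frac{\log \bar{F}_B(y)}{\log y} - \beta = -\alpha - \beta = -\beta,
\]
and the tail connection \eqref{tailconnection} delivers $\I(B^*) = \beta$. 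The only real obstacle is this boundary case $\alpha = 0$; the generic $\alpha > 0$ construction is transparent and reduces everything to the behaviour of $s \mapsto E((B^+)^s)$ at $s = \alpha$.
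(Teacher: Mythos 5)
Your argument is correct, and for the generic case $\alpha>0$ it takes a genuinely different route from the paper. The paper's construction is uniform in $\alpha$: it enlarges the probability space to carry a Pareto random variable $W$ of parameter $\beta-\alpha$ independent of $B$, sets $B^*=\min(B,W)$, and reads off $\I(B^*)=\beta$ from the tail connection \eqref{tailconnection}, exploiting that $\log\overline{F}_{B^*}=\log\overline{F}_B+\log\overline{F}_W$ with the second term having a genuine limit. Your main construction $B^*=B\mathbf{1}(B\le1)+B^{\alpha/\beta}\mathbf{1}(B>1)$ is a deterministic, monotone transformation of $B$ alone, so it needs no enlargement of the probability space and works directly from the definition \eqref{momentindexdef} of the moment index via the substitution $s\mapsto\gamma s$; the price is that it collapses when $\alpha=0$, forcing you to fall back on precisely the paper's minimum-with-Pareto device (your $U^{-1/\beta}$ is Pareto of parameter $\beta$, which is exactly the paper's $W$ when $\alpha=0$). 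So the two approaches coincide on the boundary case, and yours replaces the independent auxiliary randomness with a measurable function of $B$ in the interior. One small remark: since what ultimately matters downstream is that $B^*_i\le B_i$ can be realised jointly for the whole IID sequence, your deterministic construction is mildly cleaner in that respect, though the enlargement the paper needs is entirely harmless.
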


\begin{proof}
Let $W$ be a Pareto distributed random variable independent of $B$ with parameter $\beta-\alpha$. By a Pareto distributed random variable with parameter $\gamma>0$ we mean a random variable whose tail function is
\begin{displaymath}
   P(W>x) = \left\{
     \begin{array}{rl}
       \frac{1}{x^{\gamma}} & : x\geq 1 \\
       1& : x<1.
     \end{array}
   \right.
\end{displaymath}
Set $B^*=\min(B,W)$. Now
\begin{eqnarray*}
\limsup_{x \to \infty} \frac{\log \overline{F}_{B^*}(x)}{\log x} &=& \limsup_{x \to \infty} \left( \frac{\log P(B>x)}{\log x}+\frac{\log P(W>x)}{\log x} \right) \\
&=& \limsup_{x \to \infty} \frac{\log P(B>x)}{\log x}+\lim_{x \to \infty}\frac{\log P(W>x)}{\log x} \\
&=& -\alpha-(\beta-\alpha)=-\beta,
\end{eqnarray*}
which ends the construction. 
\end{proof}

\section{Essential suprema of $\bar{Y}_k$ and $\bar{Y}$}\label{chaz}

Let us recall that for a general random variable $X$ the \emph{essential supremum} is defined as
\begin{eqnarray*}
\esssup X=\esssup_{\omega \in \Omega} X(\omega)&=&\inf\{a \in \mathbb{R}:P(X>a)=0\} \\
&=& \sup \{ a \in \mathbb{R}: P(X>a)>0 \}.
\end{eqnarray*} 

We set $\bar{y}=\esssup \bar{Y}$ and define in an analogous way a number that describes the essential supremum of the random variable $\bar{Y}_k$. Put $\bar{y}_0=0$ and $\bar{y}_k=\esssup \bar{Y}_k$, when $k \in \mathbb{N}$. The following analysis will reveal how the sequence $(\bar{y}_k)$ behaves.

The dependence structure of the process $(Y_k)$ may lead to unexpected behaviour. The following example demonstrates how the whole process may be limited almost surely for the first $N+1$ years and then become essentially unbounded.

\begin{example}\label{maaresim}
Suppose $N \in \mathbb{N}$ is fixed. Let $W_\gamma$, for $\gamma>0$, be a Pareto distributed random variable with parameter $\gamma$. Let $\alpha>0$ and let $K$ be independent of $W_\alpha$ with $P(K=0)=P(K=1)=\frac{1}{2}$. We define $(A,B)$ by
\begin{equation*}
B=\mathbf{1}(K=0)-NW_\alpha \mathbf{1}(K=1)
\end{equation*} 
and
\begin{equation*}
A=\mathbf{1}(K=0)+W_\alpha \mathbf{1}(K=1).
\end{equation*}

Clearly $\bar{y}_k=k$ for all $k=1,2,\ldots ,N+1$. However, $\bar{y}_{N+2}=\infty$.
\end{example}

The moment index of a bounded random variable is infinite. This is why a systematic method of determining the value of $\bar{y}_k$ with respect to a given vector $(A,B)$ is needed. The next theorem summarises how the sequence $(\bar{y}_k)$ behaves and offers a new characterisation for the condition $\bar{y}=\infty$.
\begin{theorem}\label{viit} Assume that the generic random vector $(A,B)$ satisfies
\begin{enumerate}[(i)]
\item $P(A>0)=1$ \label{oz1}
\item $P(A>1)>0$ \label{oz2}
\item $P(A<1)>0$ \label{oz3}
\item $P(B>0)>0$.\label{oz4}
\end{enumerate}
Then, for any $N \in \mathbb{N}$,
\begin{equation}\label{rajoitettuyy}
\bar{y}_N=\esssup (B+\bar{y}_{N-1}A)
\end{equation}
and the following conditions are equivalent:
\begin{enumerate}
\item $\bar{y}<\infty$ \label{gohta1}
\item $\lim_{N \to \infty} \bar{y}_N<\infty$ \label{gohta2}
\item There exists $c>0$ such that $P(B+cA\leq c)=1$. \label{gohta3}
\item For all $k \in \mathbb{N}$ equality $P(Y_k>0,A_1\ldots A_k>1)=0$ holds. \label{gohta4}
\end{enumerate}
\end{theorem}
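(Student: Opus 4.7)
For the recursion \eqref{rajoitettuyy}, the starting point is the distributional identity $\bar{Y}_N \stackrel{d}{=} B + A\bar{Y}_{N-1}^+$ derived in Section \ref{stochdiff}, in which $(A,B)$ is independent of $\bar{Y}_{N-1}$. Since $\bar{y}_{N-1} \geq \bar{y}_1 = \esssup B > 0$ (using assumption (iv) together with the monotonicity $\bar{y}_{N}\geq \bar{y}_{N-1}$), we have $\bar{Y}_{N-1}^+=\bar{Y}_{N-1}$ almost surely, with essential supremum $\bar{y}_{N-1}$. Defining $\phi(y)=\esssup(B+Ay)$ for $y\geq 0$, I would verify that $\phi$ is nondecreasing (because $A>0$) and left-continuous (using $\{B+Ay_k>L\}\uparrow\{B+Ay>L\}$ whenever $y_k\uparrow y$), and combine these properties with independence plus the fact that $P(\bar{Y}_{N-1}>\bar{y}_{N-1}-\epsilon)>0$ for every $\epsilon>0$ to conclude $\esssup(B+A\bar{Y}_{N-1})=\phi(\bar{y}_{N-1})$.

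For the equivalences, the easy directions go as follows. (1) $\iff$ (2) is immediate because $\bar{Y}_N\uparrow\bar{Y}$ forces $\bar{y}_N\uparrow\bar{y}$. For (3) $\Rightarrow$ (1), I would induct on $N$ via \eqref{rajoitettuyy}: $\bar{y}_0=0\leq c$, and $\bar{y}_N\leq\esssup(B+cA)\leq c$ whenever $\bar{y}_{N-1}\leq c$. The converse (1) $\Rightarrow$ (3) follows from \eqref{yhtalo1}: setting $c=\bar{y}$, the same fixed-point argument as for the recursion gives $\bar{y}=\esssup(B+A\bar{y})$, hence $B+cA\leq c$ almost surely. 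For (3) $\Rightarrow$ (4) I would run the wealth process from $U_0=c$: the hypothesis $B_{n+1}+cA_{n+1}\leq c$ rearranges, using $A_{n+1}>0$, to $(U_n-B_{n+1})/A_{n+1}\geq c$ whenever $U_n\geq c$, so by induction $U_n\geq c$ almost surely. Iterating \eqref{modelrec} gives the explicit formula $Y_n=c-U_n A_1\cdots A_n$, hence $Y_n\leq c(1-A_1\cdots A_n)$, which is strictly negative on $\{A_1\cdots A_n>1\}$. Finally, (1) $\Rightarrow$ (4) by contrapositive: if $P(Y_k>\alpha,\,A_1\cdots A_k>\beta)>0$ for some $\alpha>0$, $\beta>1$, $k\geq 1$, iterating independent blocks of length $k$ yields $Y_{jk}\geq\alpha(\beta^j-1)/(\beta-1)$ on an event of probability $P(Y_k>\alpha,\,A_1\cdots A_k>\beta)^j>0$, forcing $\bar{y}_{jk}\to\infty$.

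The main obstacle is (4) $\Rightarrow$ (1). The natural route is again contrapositive: if no $c>0$ satisfies $P(B+cA\leq c)=1$ then $\phi(y)>y$ for all $y>0$, equivalently $P(B>y(1-A))>0$ for every $y>0$. The difficulty is that this inequality can be satisfied ``for free'' on $\{A>1\}$, where $y(1-A)<0$, without immediately producing an event on which both $Y_k>0$ and $A_1\cdots A_k>1$ hold for a single finite $k$. My plan would be a case analysis on how (3) fails on $\text{supp}(A,B)$: whether $P(A>1,B\geq 0)>0$, $P(A=1,B>0)>0$, or $\sup\{B/(1-A):A<1,\,B>0\}=\infty$. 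In each case, combining such support points with realizations from $\{A>1\}$ (which exists by assumption (ii)) through the product support of the IID sequence $((A_i,B_i))$, I would construct, for some finite $k$, an event of positive probability on which $\sum_{i=1}^k A_1\cdots A_{i-1}B_i>0$ and $A_1\cdots A_k>1$ hold simultaneously, thus producing the required violation of (4).
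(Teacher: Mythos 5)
The gap is the implication (\ref{gohta4})~$\Rightarrow$~(\ref{gohta1}). You correctly flag this as the main obstacle, but the case analysis you sketch is not exhaustive and the plan would not close. Consider the counterexample where $(A,B)$ takes the values $(2,-1)$, $(1/2,1)$, $(1/2,-1)$ each with probability $1/3$. Condition~(\ref{gohta3}) fails (no $c$ works, since $c\leq 1$ and $c\geq 2$ would be simultaneously required), yet all three of your cases fail: $P(A>1,B\geq 0)=0$, $P(A=1,B>0)=0$, and $\sup\{B/(1-A):A<1,\,B>0\}=2<\infty$. In fact (\ref{gohta4}) does fail here, but only at $k=3$: taking $(2,-1),(2,-1),(1/2,1)$ gives $Y_3=1>0$ and $A_1A_2A_3=2>1$. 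The mechanism --- first accumulate a large product via $\{A>1\}$ while $Y$ drifts negative, then use a single large $B$ at a discounted step to cross zero --- is not captured by any of your three cases, and a correct direct proof needs a genuinely more careful combinatorial argument. The paper does not attempt this at all: it cites the equivalence of (\ref{gohta1}) and (\ref{gohta4}) as Theorem~2 of \cite{nyrhinen2011stochastic}, so if you want to match the paper you should do the same; attempting a self-contained proof is a strictly harder task.

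Two smaller remarks. First, the claim that $\bar{Y}_{N-1}^+=\bar{Y}_{N-1}$ almost surely is false in general ($\bar{Y}_{N-1}$ can take negative values even though its essential supremum is positive); what you actually need, and what is true, is only that $\esssup \bar{Y}_{N-1}^+=\esssup\bar{Y}_{N-1}=\bar{y}_{N-1}$ once $\bar{y}_{N-1}\geq 0$, so the subsequent left-continuity argument for $\phi$ survives. Second, your proof of (\ref{gohta3})~$\Rightarrow$~(\ref{gohta2}) is cleaner than the paper's: a direct induction $\bar{y}_N\leq\esssup(B+cA)\leq c$ replaces the paper's more elaborate bound $\bar{y}_N\leq c(1-(a_-)^N)$ via the essential infimum of $A$, and your wealth-process argument for (\ref{gohta3})~$\Rightarrow$~(\ref{gohta4}) and the block-iteration contrapositive for (\ref{gohta1})~$\Rightarrow$~(\ref{gohta4}) are both correct and a nice supplement to what the paper records.
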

\begin{proof} We begin by showing equality \eqref{rajoitettuyy}. Denote $\esssup (B+\bar{y}_{N-1}A)=\kappa$ and define function $g \colon [0,\infty) \to \mathbb{R}\cup\{\infty\}$ by formula
\begin{equation*}
g(c)=\esssup (B+A c).
\end{equation*}

We recall from Section \ref{stochdiff} that $\bar{Y}_N \stackrel{d}{=} V_N$, where
\begin{equation*}
V_0=0, \ V_N\stackrel{d}{=}B_N+A_N \max(0,V_{N-1}).
\end{equation*}
Because $\esssup V_N= \esssup \bar{Y}_N$ and the risks of different years are independent, we get
\begin{eqnarray*}
\bar{y}_N&=& \esssup \bar{Y}_N \\
&=& \esssup V_N \\
&=& \esssup (B_N+A_N V_{N-1}) \\
& \leq & \esssup g(V_{N-1}) \\
& \leq & g(\bar{y}_{N-1}).
\end{eqnarray*}
This yields $\bar{y}_N \leq \kappa$. 

For the remaining direction, assume first $\kappa<\infty$. Fix $\epsilon>0$. Now there exists a set $H_1 \in \Omega$ for which $P(H_1)>0$ and
\begin{equation*}
B(\omega)+\bar{y}_{N-1} A(\omega)>\kappa-\epsilon, \ \forall \omega \in H_1.
\end{equation*}
We may again choose a number $\eta=\eta_\epsilon>0$ in a way that there is a set $H_2\subset H_1$, where $P(H_2)>0$, and
\begin{equation*}
B(\omega)+(\bar{y}_{N-1}-\eta) A(\omega)>\kappa-\epsilon, \ \forall \omega \in H_2.
\end{equation*}
Using the definition of supremum and the independence structure of the process $(Y_k)$ we can find a third set $H_3\subset H_2$, where $P(H_3)>0$, and
\begin{equation*}
B(\omega)+\bar{Y}_{N-1}(\omega) A(\omega)>\kappa-\epsilon, \ \forall \omega \in H_3.
\end{equation*}
This implies $\kappa-\epsilon \leq \bar{y}_{N}$ by using the connections between variables $\bar{Y}_{N-1}$ and $V_{N-1}$. Equation \eqref{rajoitettuyy} then follows by letting $\epsilon \to 0$. Suppose then that $\kappa=\infty$. The above deduction can now be done again by replacing $\kappa-\epsilon$ with an arbitrarily large number $M$. This gives the remaining result.

We proceed to the proof of equivalences \ref{gohta1}-\ref{gohta4}. The equivalence of \ref{gohta1} and \ref{gohta2} is clear, since $(\bar{y}_N)$ is a non-decreasing sequence of numbers whose limit is $\bar{y}$.

Assume \ref{gohta2}. Denote $\lim_{N \to \infty} \bar{y}_N=c$. By assumption \ref{oz4} inequality $c>0$ holds. Using the connection
\begin{equation*}
\bar{y}_{N+1}=\esssup (B+\bar{y}_{N}A)
\end{equation*}
we may deduce that
\begin{equation*}
c=\esssup (B+cA).
\end{equation*}
Hence almost surely
\begin{equation*}
B+cA\leq c
\end{equation*}
and \ref{gohta3} is valid.

Assume \ref{gohta3}. Set $a_-=\essinf A$, where the essential infimum is defined analogously to the essential supremum. By assumptions \ref{oz1} and \ref{oz3} we see that $a_- \in [0,1)$. Now, almost surely
\begin{equation*}
B\leq c(1-A) \leq c(1-a_-).
\end{equation*}
Especially $\bar{y}_1=\esssup B\leq c(1-a_-)$. Therefore 
\begin{eqnarray*}
\bar{y}_2&=&\esssup (B+\bar{y}_1 A) \\
&\leq &\esssup (B+c(1-a_-) A) \\
&=&\esssup (B+cA-ca_-A) \\
&\leq &c+\esssup (-ca_-A) \\
&=&c-c(a_-)^2=c(1-(a_-)^2).
\end{eqnarray*}
In general 
\begin{equation*}
\bar{y}_N\leq c(1-(a_-)^N)\to c<\infty,
\end{equation*}
holds when $N \to \infty$. This proves \ref{gohta2}.

Equivalence of \ref{gohta1} and \ref{gohta4} is precisely the content of theorem first proved in \cite{nyrhinen2011stochastic}, Theorem 2. 
\end{proof}

Conditions \ref{gohta3} and \ref{gohta4} of Theorem \ref{viit} can be regarded as microscopic and macroscopic ways to see when the essential supremum of the process $(Y_k)$ is unlimited. The first of these tells us what the random vectors $(A,B)$ must satisfy. The latter condition looks at the process $(Y_k)$ on a large scale. Condition \ref{gohta3} is an operational tool that can be used in proofs, whereas condition \ref{gohta4} is difficult to verify in practice.

\section{The case of bounded interval}\label{finitecase}

We will begin with a simple observation based on Corollary \ref{jakokorollaari}.

\begin{lemma}\label{jakolemma} Suppose $N \in \mathbb{N}$. Then
\begin{equation}\label{jakolemma1}
\I(\bar{Y}_N)=\min_{1\leq k\leq N} \left( \min_{\theta \in \Theta_k} \I(Y_k|\theta)\right),
\end{equation}
where
\begin{equation}\label{theeta}
\Theta_k=\{S \subset \Omega:S=\cap_{i=1}^k K_i, \ \mbox{where} \ K_i=\{B_i>0\} \ \mbox{or} \ K_i=\{B_i\leq 0\}\}.
\end{equation}
\end{lemma}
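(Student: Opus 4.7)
The plan is to reduce the claim to two results already proved: the max--min formula for moment indices (Lemma \ref{maxlemma}) and the cover formula (Corollary \ref{jakokorollaari}). The observation driving the argument is that $\bar{Y}_N=\max_{1\leq k\leq N} Y_k$ by definition, so taking moment indices turns the outer maximum into the outer minimum over $k$, while for each fixed $k$ the inner minimum over $\theta\in\Theta_k$ arises from decomposing $\Omega$ into the $2^k$ sign patterns of $B_1,\dots,B_k$.

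First I would apply Lemma \ref{maxlemma} inductively in $N$ to conclude
\begin{equation*}
\I(\bar{Y}_N)=\I\bigl(\max_{1\leq k\leq N} Y_k\bigr)=\min_{1\leq k\leq N}\I(Y_k).
\end{equation*}
Next, for each fixed $k\in\{1,\dots,N\}$, I would observe that the family $\Theta_k$ defined by \eqref{theeta} is a finite partition of $\Omega$ (hence in particular a finite cover), since each $\omega$ belongs to exactly one intersection of the form $\bigcap_{i=1}^k K_i$. Corollary \ref{jakokorollaari} then gives
\begin{equation*}
\I(Y_k)=\min_{\theta\in\Theta_k}\I(Y_k\mathbf{1}_\theta).
\end{equation*}

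To match this with the statement of the lemma, I would verify the identification $\I(Y_k\mathbf{1}_\theta)=\I(Y_k|\theta)$. Since $\mathbf{1}_\theta$ takes only the values $0$ and $1$, one has $(Y_k\mathbf{1}_\theta)^+=Y_k^+\mathbf{1}_\theta$, and therefore $E(((Y_k\mathbf{1}_\theta)^+)^s)=E((Y_k^+)^s\mathbf{1}_\theta)$ for every $s\geq 0$. Comparing with the definition of the conditional moment index (using the equivalent form $\sup\{s\geq 0:E((X^+)^s\mathbf{1}_H)<\infty\}$ given just after Corollary \ref{jakokorollaari}) yields the identification; if $P(\theta)=0$ the corresponding $\I(Y_k|\theta)$ equals $+\infty$ and so does not affect the inner minimum. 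Chaining the three equalities produces \eqref{jakolemma1}.

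There is no real obstacle here beyond bookkeeping: the one subtle point is making sure the decomposition is a cover of all of $\Omega$ so that Corollary \ref{jakokorollaari} applies, and that the identification $\I(Y_k\mathbf{1}_\theta)=\I(Y_k|\theta)$ is used with the correct, positive-part version of both quantities. Once these are in place, the proof reduces to two lines of composition of previously established results.
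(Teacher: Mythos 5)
Your proof is correct and follows essentially the same route as the paper's: apply Lemma \ref{maxlemma} (extended inductively to a finite maximum) to obtain $\I(\bar{Y}_N)=\min_{1\leq k\leq N}\I(Y_k)$, note that $\Theta_k$ is a finite partition of $\Omega$, and then invoke Corollary \ref{jakokorollaari}. The only difference is that you spell out the identification $\I(Y_k\mathbf{1}_\theta)=\I(Y_k|\theta)$ (via $(Y_k\mathbf{1}_\theta)^+=Y_k^+\mathbf{1}_\theta$), which the paper leaves implicit; this is harmless extra care, not a divergence in method.
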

\begin{proof}
Using Lemma \ref{maxlemma} it is clear that $\I(\bar{Y}_N)=\min_{1\leq k \leq N} \I(Y_k)$. For a fixed $k$ the family $\Theta_k$ of equation \eqref{theeta} forms a partition of the state space $\Omega$. Hence, by using Corollary \ref{jakokorollaari}, Equation \eqref{jakolemma1} holds. 
\end{proof}

In general, when dependence between the random variables $A$ and $B$ is possible, it is not clear in which set the minimum \eqref{jakolemma1} is obtained. An especially interesting interaction between $A$ and $B$ can be observed if the insurance risk partially cancels the financial risk. 

To understand this phenomenon, we define the function $h \colon [0,\infty] \to [0,\infty]$ by the formula
\begin{equation}\label{ihqfunktio}
h(c)=\I(B+cA),
\end{equation} 
when $0\leq c<\infty$. Because the estimate $B+c_1A\leq B+c_2A$ holds for each $0\leq c_1<c_2$, we get $\I(B+c_1A)\geq \I(B+c_2A)$ and deduce that $h$ is a decreasing function. In addition, $h$ is limited from below by $0$. Hence, we can define
\begin{equation*}
h(\infty)=\lim_{c \to \infty} h(c).
\end{equation*}

It turns out the function $h$ of formula \eqref{ihqfunktio} offers enough information from the random vector $(A,B)$ in order to express the quantities $\I(\bar{Y}_k)$. The following two examples demonstrate that the function $h$ is not necessarily right or left continuous.\footnote{The fact that $h$ is not necessarily left continuous was kindly pointed out by Harri Nyrhinen in a personal communication. Essentially, this idea is formulated in example \ref{eivasjva}.}

\begin{example}\label{hassuesim5} Suppose $W_\gamma$ is a Pareto distributed as in example \ref{maaresim}. Let $W_\alpha$ and $W_\beta$ be independent with $0<\beta<\alpha$. Assume further that $K$ is independent of $W_\alpha$ and $W_\beta$ with $P(K=0)=P(K=1)=\frac{1}{2}$. We define $(A,B)$ by
\begin{equation*}
B=-W_\alpha \mathbf{1}(K=0)-2W_\beta \mathbf{1}(K=1)+10
\end{equation*}
and
\begin{equation*}
A=W_\alpha \mathbf{1}(K=0)+W_\beta \mathbf{1}(K=1).
\end{equation*}
Now 
\begin{displaymath}
   \I(B+cA)= \left\{
     \begin{array}{ll}
       \infty & : c \in [0,1]\\
       \alpha & : c \in (1,2]\\
       \beta & : c>2.
     \end{array}
   \right.
\end{displaymath}
\end{example}

Assuming $B$ is a sum of suitable dependent random variables enables us to see the following.

\begin{example}\label{eivasjva} Suppose $W_\gamma$ is defined as in example \ref{maaresim} and $1<\beta<\alpha$. Set $A=W_{\beta/2}+W_\alpha$ and $B=-W_{\beta/2}+W_{\beta/2}^{1/2}+1$, where $W_{\beta/2}$ and $W_{\alpha}$ are independent. Noting that $(c-1)W_{\beta/2} +W_{\beta/2}^{1/2}$ is bounded for $c\in (0,1)$ and using results of Lemmas \ref{lin} and \ref{tokalemma} we obtain
\begin{displaymath}
   \I(B+cA)= \left\{
     \begin{array}{ll}
       \alpha & : c \in (0,1)\\
       \beta & :  c=1.
     \end{array}
   \right.
\end{displaymath}
\end{example}

We may now give a range in which $\I(\bar{Y}_k)$ always is.

\begin{theorem}\label{finitetheorem} Consider the process $(Y_k)$ of \eqref{modeldesc} consisting of IID vectors $(A,B)$, where $A>0$. Then, for any fixed $N \in \mathbb{N}$,
\begin{equation}\label{todistettavax}
\I(\bar{Y}_N)\in \Big{[}h(\bar{y}_{N-1}),\lim_{c \to \bar{y}_{N-1}-} h(c)\Big{]},
\end{equation}
where $h$ is the function defined in \eqref{ihqfunktio}.
\end{theorem}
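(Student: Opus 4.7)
My approach leans entirely on the distributional recursion $\bar Y_N \stackrel{d}{=} V_N = B + A V_{N-1}^+$ established in Section \ref{stochdiff}, where $V_{N-1}$ has the same law as $\bar Y_{N-1}$ and is independent of $(A,B)$. The idea for both endpoints of the interval \eqref{todistettavax} is to replace the random pivot $V_{N-1}^+$ by an appropriate deterministic constant and then read off the moment index of the resulting simpler variable through the definition of $h$.

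For the left endpoint, $\I(\bar Y_N)\ge h(\bar y_{N-1})$, I use that $V_{N-1}^+\le \bar y_{N-1}$ almost surely together with $A>0$, which gives the pointwise estimate $V_N\le B + A\bar y_{N-1}$ almost surely. Lemma \ref{lin} part \ref{kohta3} then yields $\I(V_N)\ge \I(B + A\bar y_{N-1}) = h(\bar y_{N-1})$, and by Lemma \ref{yhteyslause} this transfers to $\bar Y_N$. (If $\bar y_{N-1}=\infty$, the interval in \eqref{todistettavax} collapses to $\{h(\infty)\}$ and is recovered as a limit of the above by truncating $V_{N-1}^+$ at $M$ and letting $M\to\infty$.)

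For the right endpoint, $\I(\bar Y_N)\le \lim_{c\to\bar y_{N-1}-}h(c)$, I fix any $c\in[0,\bar y_{N-1})$. By the definition of essential supremum, $P(V_{N-1}^+>c)>0$, while the independence of $V_{N-1}^+$ from $(A,B)$ together with $A>0$ lets me bound the tail from below:
\[
P(V_N>x)\ \ge\ P(B+Ac>x,\ V_{N-1}^+>c)\ =\ P(B+Ac>x)\,P(V_{N-1}^+>c),
\]
since on the event $\{V_{N-1}^+>c\}$ the positivity of $A$ forces $B+AV_{N-1}^+>B+Ac$. Taking logarithms, dividing by $\log x$, and letting $x\to\infty$, the multiplicative constant $P(V_{N-1}^+>c)$ disappears, so the tail characterisation \eqref{tailconnection} delivers $\I(\bar Y_N)\le \I(B+Ac) = h(c)$. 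Sending $c\uparrow \bar y_{N-1}$ gives the desired upper bound.

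The subtle point I expect to be the crux is the asymmetry built into the two endpoints. The pivot $c$ must be taken \emph{strictly} below $\bar y_{N-1}$ to guarantee $P(V_{N-1}^+>c)>0$; were $c=\bar y_{N-1}$ admissible, the upper bound would improve to $h(\bar y_{N-1})$ and the interval would degenerate. Example \ref{eivasjva} shows that $h$ can genuinely jump at $\bar y_{N-1}$, so this looseness is intrinsic and not a proof artefact. The edge case $\bar y_{N-1}=0$ occurs only for $N=1$, where $\bar Y_1=B$ and both endpoints collapse to $h(0)=\I(B)$, making the statement automatic.
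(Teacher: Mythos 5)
Your proof follows essentially the same strategy as the paper's: for the left endpoint you replace the independent pivot $V_{N-1}^+$ by the constant $\bar y_{N-1}$ from above and invoke monotonicity of the moment index, and for the right endpoint you condition on the positive-probability event $\{V_{N-1}^+>c\}$ with $c<\bar y_{N-1}$ and peel off the constant factor. The only cosmetic difference is that you run the upper-bound argument through tail probabilities and the characterisation \eqref{tailconnection}, whereas the paper works with the moment $E(((B+A(\bar y_{N-1}-\epsilon))^+)^s)$ directly; these are equivalent.

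One cautionary remark on the parenthetical handling of $\bar y_{N-1}=\infty$. The truncation you sketch does not give the left endpoint: replacing $V_{N-1}^+$ by $\min(V_{N-1}^+,M)$ produces a variable \emph{smaller} than $V_N$, so $\I$ of the truncated variable dominates $\I(V_N)$ and you learn nothing from it. Splitting via Lemma \ref{separation} into $\{V_{N-1}^+\le M\}$ and $\{V_{N-1}^+>M\}$ also fails, because the moment index on the second piece need not blow up as $M\to\infty$. To be fair, the paper's own proof has the same blind spot: the bound $E(((B+A\bar y_{N-1})^+)^s)$ is not meaningful when $\bar y_{N-1}=\infty$. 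In that case only the upper estimate $\I(\bar Y_N)\le h(\infty)$ is established by either argument, and the matching lower bound would require a separate treatment; you should not present the truncation as if it closes the gap.
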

\begin{proof}
To prove \eqref{todistettavax} we note that for any $\epsilon>0$ and arbitrary $s<\I(\bar{Y}_N)$
\begin{eqnarray*}
E((\bar{Y}_N)^s\mathbf{1}(\bar{Y}_N>0))&=& E(((B+A\bar{Y}_{N-1}^+)^+)^s) \\
&\geq & E(((B+A\bar{Y}_{N-1}^+)^+)^s\mathbf{1}(\bar{Y}_{N-1}>\bar{y}_{N-1}-\epsilon)) \\
&\geq & E(((B+A(\bar{y}_{N-1}-\epsilon))^+)^s\mathbf{1}(\bar{Y}_{N-1}>\bar{y}_{N-1}-\epsilon)) \\
&=& E(((B+A(\bar{y}_{N-1}-\epsilon))^+)^s)P(\bar{Y}_{N-1}>\bar{y}_{N-1}-\epsilon),
\end{eqnarray*}
which indicates that $E(((B+A(\bar{y}_{N-1}-\epsilon))^+)^s)$ cannot be infinite when $s<\I(\bar{Y}_N)$. Hence
\begin{equation}\label{jihaa}
\I(\bar{Y}_N)\leq \lim_{c \to \bar{y}_{N-1}-} h(c).
\end{equation}

On the other hand
\begin{eqnarray*}
E((\bar{Y}_N)^s\mathbf{1}(\bar{Y}_N>0))&=& E(((B+A\bar{Y}_{N-1}^+)^+)^s) \\
&\leq & E(((B+A\bar{y}_{N-1})^+)^s),
\end{eqnarray*}
so
\begin{equation}\label{jihaa2}
\I(\bar{Y}_N)\geq h(\bar{y}_{N-1}).
\end{equation}

Equations \eqref{jihaa} and \eqref{jihaa2} prove the claim. 
\end{proof}

\begin{corollary}\label{jatkokoro} Assume that
\begin{enumerate}
\item $A$ and $B$ are independent or
\item $B$ is non-negative.
\end{enumerate}
Then 
\begin{equation*}
\I(\bar{Y}_N)=\min(\I(A),\I(B))
\end{equation*}
for any $N\geq 2$.
\end{corollary}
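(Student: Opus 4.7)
The plan is to deduce the corollary as an immediate consequence of Theorem \ref{finitetheorem}. That theorem squeezes $\I(\bar{Y}_N)$ into the interval $[h(\bar{y}_{N-1}),\lim_{c\to \bar{y}_{N-1}-}h(c)]$, where $h(c)=\I(B+cA)$ is the monotone function defined in \eqref{ihqfunktio}. If I can show that, under either hypothesis (1) or (2), the function $h$ is constant on a left neighbourhood of $\bar{y}_{N-1}$ (indeed, constant on all of $(0,\infty]$), then the interval collapses to a single point and the conclusion follows.

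First I would record that $\bar{y}_{N-1}>0$ for every $N\geq 2$. Since $\bar{y}_{N-1}\geq \bar{y}_1=\esssup B$ and assumption \ref{as2} ensures $P(B>0)>0$, necessarily $\esssup B>0$. So the relevant values of $h$ are taken strictly to the right of the origin, where I do not need to worry about the value $h(0)=\I(B)$.

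Next I would evaluate $h(c)$ for arbitrary $c>0$. Lemma \ref{lin} part \ref{kohta1} yields $\I(cA)=\I(A)$. Under hypothesis (1), $B$ and $cA$ are independent, so the independent case of Lemma \ref{tokalemma} gives $\I(B+cA)=\min(\I(B),\I(cA))=\min(\I(A),\I(B))$. Under hypothesis (2), $B\geq 0$ and, by assumption \ref{as1}, $cA>0$, hence both summands are non-negative and the non-negative case of Lemma \ref{tokalemma} again produces $\I(B+cA)=\min(\I(A),\I(B))$. Thus in either setting $h$ is the constant function $\min(\I(A),\I(B))$ on $(0,\infty)$, and the extended value $h(\infty)=\lim_{c\to\infty}h(c)$ equals the same constant.

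Putting these observations together, both endpoints $h(\bar{y}_{N-1})$ and $\lim_{c\to \bar{y}_{N-1}-}h(c)$ equal $\min(\I(A),\I(B))$, so Theorem \ref{finitetheorem} forces $\I(\bar{Y}_N)=\min(\I(A),\I(B))$ for every $N\geq 2$. The only place where any vigilance is needed is the remark that $\bar{y}_{N-1}>0$, which is what prevents us from accidentally reading off the value $h(0)=\I(B)$; apart from this, the result is a direct reading of the preparatory lemmas.
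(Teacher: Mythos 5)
Your proposal is correct and follows essentially the same route as the paper: both deduce the corollary from Theorem \ref{finitetheorem} by observing, via Lemma \ref{lin} part \ref{kohta1} and Lemma \ref{tokalemma}, that $h$ is the constant $\min(\I(A),\I(B))$ on $(0,\infty)$. Your explicit check that $\bar{y}_{N-1}>0$ for $N\geq 2$ (so that one never evaluates $h$ at the origin) is a useful detail that the paper leaves implicit.
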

\begin{proof} By Lemma \ref{tokalemma} and Lemma \ref{lin} part \ref{kohta1} the function $h$ of \eqref{ihqfunktio} is the constant function $\min(\I(A),\I(B))$ for all $c>0$. 
\end{proof}

What is new in Theorem \ref{finitetheorem} compared to Theorem 4.1 of \cite{Tang2003299} is that the risks of the same year may have an arbitrary dependence structure. This is possible due to the refined use of moment indices. Corollary \ref{jatkokoro} also extends the result of \cite{Tang2003299} beyond independence, if $B$ is non-negative. 

\section{The case of unbounded interval}\label{mainproof}

This section is dedicated to the proof of the last main theorem. The proof is divided into two parts. The aim of the first part is to obtain an upper bound for the left hand side of \eqref{ultimateruin} in the most general possible setting. The second part is divided into different cases.

\begin{theorem}\label{kolmastheorem} Assume that $(A_i)$ and $(B_i)$ satisfy the assumptions \ref{as3}-\ref{as2} of Section \ref{alkuassumptions}. Assume further that $\bar{y}=\infty$. Then
\begin{equation}\label{ultimateruin}
\limsup_{U_0 \to \infty} \frac{\log P(T_{U_0}<\infty)}{\log U_0}=-\min(\I^1(A),\I(B)),
\end{equation}
that is, $\mathbb{I}(\bar{Y})=\min(\mathbb{I}^1(A),\mathbb{I}(B))$.
\end{theorem}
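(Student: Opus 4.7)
By the tail--moment relation \eqref{tailconnection} applied to $\bar{Y}$, together with the identity $P(T_{U_{0}}<\infty)=P(\bar{Y}>U_{0})$, the claim \eqref{ultimateruin} is equivalent to
\begin{equation*}
\I(\bar{Y})=\min(\I^{1}(A),\I(B)).
\end{equation*}
The plan is to prove the two inequalities separately, in line with the split announced by the author into a Lundberg-type upper bound for the ruin probability and a matching lower bound treated by cases.

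For the lower bound $\I(\bar{Y})\geq\min(\I^{1}(A),\I(B))$, I would dominate $\bar{Y}$ pathwise by the non-negative series $\tilde{Y}:=\sum_{i=1}^{\infty}A_{1}\cdots A_{i-1}B_{i}^{+}$, using $Y_{k}\leq\sum_{i=1}^{k}A_{1}\cdots A_{i-1}B_{i}^{+}\leq\tilde{Y}$. By Lemma \ref{lin} part \ref{kohta3} it then suffices to bound $\I(\tilde{Y})$ from below. Fix $s$ with $0<s<\min(\I^{1}(A),\I(B))$. The key structural observation is that $A_{1}\cdots A_{i-1}$ is a function of years $1,\ldots,i-1$ while $B_{i}^{+}$ is a function of year $i$, so the IID assumption on the vectors $((A_{j},B_{j}))$ makes them independent and yields $E((A_{1}\cdots A_{i-1}B_{i}^{+})^{s})=E(A^{s})^{i-1}E((B^{+})^{s})$. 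Applying Minkowski's inequality when $s\geq 1$, and the subadditivity $(a+b)^{s}\leq a^{s}+b^{s}$ when $s<1$, the bound for $E(\tilde{Y}^{s})$ reduces to a geometric series with ratio $E(A^{s})<1$ (since $s<\I^{1}(A)$) and finite front factor $E((B^{+})^{s})$ (since $s<\I(B)$). This series converges, whence $\I(\tilde{Y})\geq s$, and letting $s\nearrow\min(\I^{1}(A),\I(B))$ finishes this half.

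For the upper bound, $\I(\bar{Y})\leq\I(B)$ is immediate from $\bar{Y}\geq Y_{1}=B_{1}$ and Lemma \ref{lin} part \ref{kohta3}. The substantive estimate is $\I(\bar{Y})\leq\I^{1}(A)$. In the principal case $0<\I^{1}(A)<\infty$ I would fix $s$ just above $\I^{1}(A)$ (so $E(A^{s})>1$) and produce a matching tail lower bound $P(\bar{Y}>U_{0})\gtrsim U_{0}^{-s}$ as follows. Choose $b>0$ with $P(B>b)>0$ and note that $Y_{n}=A_{1}\cdots A_{n-1}B_{n}+Y_{n-1}$ together with the independence of year $n$ from years $1,\ldots,n-1$ gives
\begin{equation*}
P(\bar{Y}>U_{0})\geq P(B>b)\cdot P\bigl(A_{1}\cdots A_{n-1}>2U_{0}/b,\ Y_{n-1}\geq-U_{0}\bigr).
\end{equation*}
Choosing $n\sim\gamma\log U_{0}$ with $\gamma$ determined by the Legendre transform of $\log E(A^{s})$, a Cram\'{e}r-type large deviation estimate for the random walk $\sum\log A_{i}$ delivers the desired polynomial lower bound, provided the remainder $P(Y_{n-1}<-U_{0})$ is controlled to be of strictly smaller order. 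Letting $s\searrow\I^{1}(A)$ completes this case. The boundary values $\I^{1}(A)\in\{0,\infty\}$ are handled by the monotony of Section \ref{arviokappale}: Lemma \ref{aarviolemma} furnishes a bounded $A_{\epsilon}\leq A$ with $\I^{1}(A_{\epsilon})=\I^{1}(A)+\epsilon$, and the comparison $\bar{Y}^{*}\leq\bar{Y}$ established there gives $\I(\bar{Y})\leq\I(\bar{Y}^{*})\leq\I^{1}(A_{\epsilon})$; letting $\epsilon\to 0$ concludes the argument.

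The main obstacle is precisely this large-deviation step for $\I(\bar{Y})\leq\I^{1}(A)$. The within-year dependence between $A_{i}$ and $B_{i}$ blocks a naive application of Cram\'{e}r's theorem to $\sum\log A_{i}$ together with auxiliary events on the $B_{i}$, and the remainder $Y_{n-1}$ must be controlled even though $n$ grows with $U_{0}$. The monotony reductions of Section \ref{estimation} (Lemmas \ref{aarviolemma} and \ref{omamoma}) are designed precisely to circumvent the classical technical hypotheses $\I^{1}(A)<\I(A)$ and $\I^{1}(A)<\I(B)$; I would expect the proof to reduce through these lemmas to a subcase in which $A$ is bounded and $B$ has sufficiently many moments, treat that subcase by the classical machinery, and then recover the full statement by letting the approximation parameters tend to zero.
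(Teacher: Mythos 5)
Your first half (the bound $\I(\bar{Y})\geq\min(\I^1(A),\I(B))$, equivalently the upper bound on $\log P(T_{U_0}<\infty)/\log U_0$) matches the paper's argument almost verbatim: dominate $\bar{Y}_n\mathbf{1}(\bar{Y}_n>0)$ by $\sum_{i=1}^n A_1\cdots A_{i-1}B_i^+$, split on $s\geq 1$ versus $s<1$, use Minkowski or subadditivity, and sum the geometric series $\sum E(A^s)^{(i-1)/s}$ which converges because $s<\I^1(A)$ forces $E(A^s)<1$. That part is correct.

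The other half has a genuine gap, and your proposed route is not the one the paper takes. The substantive inequality is $\I(\bar{Y})\leq\I^1(A)$ when $\I(B)>\I^1(A)$, and you try to get it from a Cram\'er-type tail lower bound on $P(A_1\cdots A_{n-1}>2U_0/b,\ Y_{n-1}\geq -U_0)$ with $n\sim\gamma\log U_0$. You yourself flag the two obstructions --- within-year dependence of $(A_i,B_i)$ and control of $P(Y_{n-1}<-U_0)$ as $n$ grows with $U_0$ --- but you do not close either. Neither is cosmetic: on the event $\{A_1\cdots A_{n-1}\ \text{large}\}$ the remainder $Y_{n-1}$ can be pushed arbitrarily far negative precisely because the same products $A_1\cdots A_{i-1}$ amplify negative values of $B_i$, and nothing in assumptions \ref{as3}--\ref{as2} gives a left-tail control on $B$ that would make $P(Y_{n-1}<-U_0)$ a lower-order term. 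The reduction lemmas \ref{aarviolemma} and \ref{omamoma} do not buy you that either: Lemma \ref{aarviolemma} bounds $A$ and Lemma \ref{omamoma} lightens the \emph{right} tail of $B$, but neither controls the left tail of $B$, so the subcase you ``expect'' to reduce to does not in fact eliminate the difficulty.

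The idea actually used in the paper and absent from your proposal is to exploit the stochastic fixed-point equation $\bar{Y}\stackrel{d}{=}B+A\bar{Y}^+$ in the \emph{reverse} direction. Iterating $\bar{Y}^+\geq(Y_k+A_1\cdots A_k\bar{Y})^+$ (with $\bar{Y}$ on the right independent of $(A_1,B_1),\ldots,(A_k,B_k)$) and comparing $E((\bar{Y}-c)^s\mathbf{1}(\bar{Y}>c))$ with itself yields, for every $s<\I(\bar{Y})$, $c>0$ and $k\in\mathbb{N}$, the Lundberg-type constraint
\begin{equation*}
E\bigl((A_1\cdots A_k)^s\,\mathbf{1}(Y_k+A_1\cdots A_k c>c)\bigr)\leq 1,
\end{equation*}
using the hypothesis $\bar{y}=\infty$ to guarantee $E((\bar{Y}-c)^s\mathbf{1}(\bar{Y}>c))>0$ so division is legitimate. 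Lemma \ref{alleyksilemma} then produces, for any $s$ with $\I^1(A)<s<\I(A)$, a choice of $c$ and $n$ making that same expectation exceed $1$ (by letting $c\to\infty$ via Lemma \ref{laskulemma} and observing that $E((A_1\cdots A_n)^s\mathbf{1}(A_1\cdots A_n>1))>1$ for $n$ large). The contradiction gives $\I(\bar{Y})\leq\I^1(A)+\epsilon$ with no large-deviations machinery and no need to control $P(Y_{n-1}<-U_0)$; the joint event $\{Y_k+A_1\cdots A_k c>c\}$ absorbs the dependence that blocks your route. Your outline is a reasonable classical heuristic, but as written it does not prove $\I(\bar{Y})\leq\I^1(A)$, which is the content of the theorem.
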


If $Y_\infty$ of \eqref{infinitesumvariable} is well defined, we set $y_\infty=\esssup Y_\infty$ and get the following corollary. Equivalent conditions for $y_\infty=\infty$ can be found from \cite{nyrhinen2011stochastic}, Theorem 2.2.

\begin{corollary}\label{kokorrko} Suppose \ref{as3}-\ref{as2} of Section \ref{alkuassumptions} hold and $y_\infty=\infty$. Then 
\begin{equation}\label{seriesruin}
\limsup_{U_0 \to \infty} \frac{\log P(Y_\infty>U_0)}{\log U_0}=-\min(\I^1(A),\I(B)),
\end{equation}
that is, $\mathbb{I}(Y_\infty)=\min(\mathbb{I}^1(A),\mathbb{I}(B))$.
\begin{proof} We see that the assumptions of Theorem \ref{kolmastheorem} are valid. In addition, $P(Y_\infty>0)>0$ holds. Now \eqref{inftysup} yields the result. 
\end{proof}
\end{corollary}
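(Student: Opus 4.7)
The plan is to deduce the corollary from Theorem~\ref{kolmastheorem} by sandwiching the tail $P(Y_\infty>u)$ between two scalar multiples of the tail $P(\bar{Y}>u)$, using the two-sided bound \eqref{inftysup} recalled from \cite{nyrhinen2011stochastic}, and then passing to $\log\cdot/\log u$ and taking $\limsup$ as $u\to\infty$. The additive constant that appears when we log-linearise the right inequality is $\log P(Y_\infty>0)$, which is finite and independent of $u$; after dividing by $\log u$ it vanishes in the limit, so the two limsups will coincide.

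First I would verify the hypotheses needed to invoke Theorem~\ref{kolmastheorem}. Since the series $Y_\infty$ is assumed to converge almost surely, we have $Y_\infty=\lim_{n\to\infty}Y_n\leq \sup_n Y_n=\bar{Y}$ almost surely, so $\bar{y}=\esssup\bar{Y}\geq \esssup Y_\infty=y_\infty=\infty$. Thus $\bar{y}=\infty$, which, together with assumptions \ref{as3}--\ref{as2}, is precisely what Theorem~\ref{kolmastheorem} requires. I would also note that $y_\infty=\infty$ trivially forces $P(Y_\infty>0)>0$: if we had $P(Y_\infty>0)=0$, then $Y_\infty\leq 0$ almost surely and $\esssup Y_\infty\leq 0<\infty$, a contradiction. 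Hence the right factor in \eqref{inftysup} is a strictly positive constant.

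Next I would execute the sandwich. For $u>1$, take logarithms of
\begin{equation*}
P(\bar{Y}>u)\geq P(Y_\infty>u)\geq P(Y_\infty>0)\,P(\bar{Y}>u)
\end{equation*}
and divide by $\log u>0$ to get
\begin{equation*}
\frac{\log P(\bar{Y}>u)}{\log u}\;\geq\;\frac{\log P(Y_\infty>u)}{\log u}\;\geq\;\frac{\log P(Y_\infty>0)}{\log u}+\frac{\log P(\bar{Y}>u)}{\log u}.
\end{equation*}
Taking $\limsup_{u\to\infty}$ on each side, and using that $\log P(Y_\infty>0)/\log u\to 0$, I get
\begin{equation*}
\limsup_{u\to\infty}\frac{\log P(\bar{Y}>u)}{\log u}=\limsup_{u\to\infty}\frac{\log P(Y_\infty>u)}{\log u}.
\end{equation*}

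Finally, by the identification $\{T_{U_0}<\infty\}=\{\bar{Y}>U_0\}$ recalled in the Background subsection, Theorem~\ref{kolmastheorem} tells us the left-hand limsup equals $-\min(\I^1(A),\I(B))$, and the right-hand limsup is the quantity appearing in \eqref{seriesruin}, so the two are equal. There is no serious obstacle here beyond the bookkeeping in the previous paragraph; the only point needing a sentence of care is the implication $y_\infty=\infty\Rightarrow \bar{y}=\infty$, but this is immediate from the pathwise inequality $Y_\infty\leq \bar{Y}$ once the perpetuity is known to converge almost surely.
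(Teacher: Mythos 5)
Your proposal is correct and follows essentially the same route as the paper: verify that $y_\infty=\infty$ gives both $\bar{y}=\infty$ (via $Y_\infty\leq\bar{Y}$) and $P(Y_\infty>0)>0$, then apply Theorem \ref{kolmastheorem} together with the two-sided bound \eqref{inftysup}. The paper's proof is merely a terser statement of exactly these steps, so your version just makes the bookkeeping explicit.
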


If $B$ can take values of both signs with positive probability and conditions of Corollary \ref{kokorrko} hold, we immediately obtain
\begin{equation*}
\I(|Y_\infty|)=\min(\I(-Y_\infty),\I(Y_\infty))=\min(\I^1(A),\I(|B|)).
\end{equation*}
This is consistent with Theorem 1.4 of \cite{alsmeyer2009distributional}.

\subsection{Upper bound}\label{upperbounds}

\begin{proof}
 
Only the case $0<s<\min(\I^1(A),\I(B))$ has mathematical content. We begin by noting that
\begin{equation}\label{sharperestimate}
\bar{Y}_n\mathbf{1}(\bar{Y}_n>0) \leq \sum_{i=1}^n A_1\ldots A_{i-1}B_i^+.
\end{equation}
We will consider the following two cases. 
\begin{enumerate}
\item If $1<s$ we may use \eqref{sharperestimate} and the Minkowski's inequality to obtain
\begin{eqnarray*}
\left( E\left( \left(\bar{Y}_n\right) ^s \mathbf{1}\left(\bar{Y}_n>0\right) \right) \right)^{1/s} &\leq& \sum_{i=1}^n \left( E\left( \left(A_1\ldots A_{i-1} B_i^+\right)^s\right) \right)^{1/s} \\
&=&E\left({(B_1^+)}^s\right)\sum_{i=1}^n \left( E\left(A^s\right)\right) ^{(i-1)/s} \\
&\leq & E\left({\left(B_1^+\right)}^s\right)\sum_{i=1}^\infty \left( E\left(A^s\right)\right)^{(i-1)/s}<\infty.
\end{eqnarray*}
Now, by monotone convergence  
\begin{equation*}
E\left(\left(\bar{Y}\right)^s\mathbf{1}\left(\bar{Y}>0\right)\right)=\lim_{n \to \infty} E\left( \left(\bar{Y}_n\right)^s \mathbf{1}\left(\bar{Y}_n>0\right)\right)<\infty.
\end{equation*}

\item If $0<s\leq 1$ we may use subadditivity of the function $x \mapsto x^s$ for $x\geq 0$ and obtain similarly that $E((\bar{Y}_n)^s \mathbf{1}(\bar{Y}>0))<C$ for a constant $C$ that does not depend on $n$. This gives $E(\bar{Y}^s\mathbf{1}(\bar{Y}>0))<\infty$ as before.
\end{enumerate}

In either case
\begin{equation*}
E(\bar{Y}^s\mathbf{1}(\bar{Y}>0))\geq E(\bar{Y}^s\mathbf{1}(\bar{Y}>U_0))\geq U_0^{s}P(\bar{Y}>U_0),
\end{equation*}
which yields
\begin{equation*}
\limsup_{U_0 \to \infty} \frac{ \log P(\bar{Y}>U_0)}{\log{U_0}}\leq -s
\end{equation*}
for all $0<s<\min(\I^1(A),\I(B))$. Therefore 
\begin{equation*}
\limsup_{U_0 \to \infty} \frac{ \log P(T_{U_0}<\infty)}{\log{U_0}}=\limsup_{U_0 \to \infty} \frac{ \log P(\bar{Y}>U_0)}{\log{U_0}}\leq -\min(\I^1(A),\I(B)).
\end{equation*}
\end{proof}

\subsection{Lower bound}\label{lowerbounds}

We begin the proof of the lower bound by restricting ourselves to the most difficult situation. The remaining cases can be deduced from this result using estimation methods developed in Section \ref{estimation}.

\subsubsection{Restricted setting}\label{restricted}

We assume that 
\begin{equation}\label{assumptionsxx}
\min(\I^1(A),\I(B))>0 \ \mbox{and} \ \I^1(A)<\I(A).
\end{equation}
We will need two additional lemmas. Lemma \ref{laskulemma} is merely an observation, but Lemma \ref{alleyksilemma} presents a fundamental property of the process $(Y_k)$.

\begin{lemma}\label{laskulemma} Let $\chi$ be a real valued random variable and $\xi$ a strictly positive random variable. Then, for any $c>0$,
\begin{eqnarray}
\{\chi+\xi c>c\}&=&\{\chi>0,\xi\geq 1\} \label{joukkoesitys} \\
&\cup& \{\frac{\chi}{1-\xi}<c,\chi\leq 0,\xi> 1\} \nonumber \\
&\cup& \{\frac{\chi}{1-\xi}>c,\chi> 0,\xi< 1\},\nonumber 
\end{eqnarray}
where the sets of the union on the right hand side are separate. Furthermore 
\begin{equation*}
\lim_{c \to 0+} \mathbf{1}(\chi+\xi c>c)(\omega)=\mathbf{1}(\chi>0)(\omega)+\mathbf{1}(\chi=0,\xi>1)(\omega)
\end{equation*}
and
\begin{equation*}
\lim_{c \to \infty} \mathbf{1}(\chi+\xi c>c)(\omega)=\mathbf{1}(\xi>1)(\omega)+\mathbf{1}(\chi>0,\xi=1)(\omega)
\end{equation*}
for all $\omega \in \Omega$. \qed
\end{lemma}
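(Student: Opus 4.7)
The plan is that this is really a pointwise, deterministic statement about real numbers: for each $\omega$, set $\chi=\chi(\omega)$ and $\xi=\xi(\omega)>0$ and analyze the inequality $\chi+\xi c>c$, i.e.\ $\chi>c(1-\xi)$, as a condition on the real parameter $c>0$. No probability or measurability is used beyond the fact that indicator and set identities may be verified $\omega$ by $\omega$.

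For the set decomposition, I would split into three mutually exclusive cases according to the sign of $1-\xi$. If $\xi=1$, the inequality is $\chi>0$, which falls into the first listed set $\{\chi>0,\xi\geq 1\}$. If $\xi>1$, then $1-\xi<0$, and dividing flips the inequality to $\chi/(1-\xi)<c$; this is automatic whenever $\chi>0$ (then $\chi/(1-\xi)<0<c$, again landing in $\{\chi>0,\xi\geq 1\}$), while for $\chi\leq 0$ one gets exactly the second set. If $\xi<1$, then $1-\xi>0$ and the inequality becomes $\chi/(1-\xi)>c$, which forces $\chi>0$ (since the right side is positive) and produces exactly the third set. Disjointness follows since the first set has $\chi>0$ and $\xi\geq 1$, the second has $\chi\leq 0$, and the third has $\xi<1$; any two of these share an empty condition.

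For the two limit identities I would just inspect $\chi>c(1-\xi)$ pointwise. As $c\to 0+$ the right side tends to $0$, so the indicator tends to $1$ on $\{\chi>0\}$, to $0$ on $\{\chi<0\}$, and on $\{\chi=0\}$ the condition reduces to $0>c(1-\xi)$, equivalently $\xi>1$; adding these gives $\mathbf 1(\chi>0)+\mathbf 1(\chi=0,\xi>1)$. As $c\to\infty$ the right side tends to $-\infty$ on $\{\xi>1\}$, to $+\infty$ on $\{\xi<1\}$, while on $\{\xi=1\}$ the condition is simply $\chi>0$, yielding $\mathbf 1(\xi>1)+\mathbf 1(\chi>0,\xi=1)$.

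There is no genuine mathematical obstacle here; the lemma is bookkeeping. The only mildly delicate point is keeping track of the boundary cases $\chi=0$ and $\xi=1$ so that each point of $\Omega$ is placed in exactly one of the three sets (or in none) and so that the two one-sided limits are stated with the correct strict/non-strict inequalities. I would therefore write the proof as a short case analysis rather than a computation.
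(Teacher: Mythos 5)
Your argument is correct, and it is exactly the case analysis one would write out. The paper itself supplies no proof — it introduces this lemma with the remark that it is ``merely an observation'' — so there is nothing to compare against; you have simply filled in the bookkeeping. Your rewriting of the event as $\{\chi > c(1-\xi)\}$ and the three-way split on the sign of $1-\xi$ gives the disjoint decomposition cleanly (noting that when $\xi>1$ and $\chi>0$ the condition is automatic and the point is absorbed into the first set), and the two pointwise limits follow directly from the behaviour of $c(1-\xi)$ as $c\to 0+$ and $c\to\infty$, with the boundary cases $\chi=0$ and $\xi=1$ handled exactly as you state.
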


\begin{lemma}\label{alleyksilemma} Assume $\I^1(A)<s<\I(A)$. 

Then we may choose $c=c_s$ and $n=n_s$ so that
\begin{equation}\label{osoitettava}
E((A_1\ldots A_n)^s\mathbf{1}(Y_n+A_1\ldots A_n c>c))>1.
\end{equation}
\end{lemma}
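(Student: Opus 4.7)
The plan is to pick $n$ first so that on the event $\{A_1\ldots A_n>1\}$ the moment $E((A_1\ldots A_n)^s)$ is already comfortably larger than $1$, and then to send $c\to\infty$ so that the indicator $\mathbf{1}(Y_n+A_1\ldots A_n c>c)$ approaches $\mathbf{1}(A_1\ldots A_n>1)$ from below almost everywhere (up to a non-negative extra term on the boundary $\{A_1\ldots A_n=1\}$). The pointwise convergence statement needed in the last step is exactly the $c\to\infty$ part of Lemma~\ref{laskulemma} applied with $\chi=Y_n$ and $\xi=A_1\ldots A_n$.

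First I would record the analytic input from the two hypotheses. Because $s<\I(A)$, we have $E(A^s)<\infty$; because $s>\I^1(A)$, the value $s$ is not in $\{t\ge0:E(A^t)\le 1\}$, so necessarily $1<E(A^s)<\infty$. By the IID assumption, $E((A_1\ldots A_n)^s)=E(A^s)^n$, which tends to $\infty$ with $n$. Writing $\xi_n=A_1\ldots A_n$ and splitting,
\begin{equation*}
E(\xi_n^s)=E\bigl(\xi_n^s\mathbf{1}(\xi_n\le 1)\bigr)+E\bigl(\xi_n^s\mathbf{1}(\xi_n> 1)\bigr),
\end{equation*}
and since $\xi_n^s\mathbf{1}(\xi_n\le 1)\le 1$ pointwise for $s\ge 0$, the first term is bounded by $1$. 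Hence
\begin{equation*}
E\bigl(\xi_n^s\mathbf{1}(\xi_n> 1)\bigr)\ge E(A^s)^n-1.
\end{equation*}
Choose $n=n_s$ so large that $E(A^s)^n-1>2$ (say); this fixes $n$ with $E(\xi_n^s\mathbf{1}(\xi_n>1))>2$.

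Next I would pass to the limit $c\to\infty$. By Lemma~\ref{laskulemma},
\begin{equation*}
\lim_{c\to\infty}\mathbf{1}(Y_n+\xi_n c>c)(\omega)=\mathbf{1}(\xi_n>1)(\omega)+\mathbf{1}(Y_n>0,\xi_n=1)(\omega)
\end{equation*}
for every $\omega$, and the trivial pointwise bound $\xi_n^s\mathbf{1}(Y_n+\xi_n c>c)\le\xi_n^s$ produces an integrable dominating function because $E(\xi_n^s)=E(A^s)^n<\infty$. Dominated convergence therefore yields
\begin{equation*}
\lim_{c\to\infty}E\bigl(\xi_n^s\mathbf{1}(Y_n+\xi_n c>c)\bigr)=E\bigl(\xi_n^s\mathbf{1}(\xi_n>1)\bigr)+E\bigl(\xi_n^s\mathbf{1}(Y_n>0,\xi_n=1)\bigr)\ge E\bigl(\xi_n^s\mathbf{1}(\xi_n>1)\bigr)>2.
\end{equation*}
In particular, the left-hand side of \eqref{osoitettava} exceeds $1$ for all sufficiently large $c$; picking any such $c=c_s$ completes the argument.

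There is no single hard step here; the only place that demands care is the interaction between the two hypotheses in \eqref{assumptionsxx}. The bound $s<\I(A)$ is used exactly once, to obtain a dominating function for the dominated convergence argument, while the bound $s>\I^1(A)$ is used exactly once, to guarantee $E(A^s)>1$ and hence unbounded growth of $E(\xi_n^s\mathbf{1}(\xi_n>1))$ in $n$. The conceptual point is the choice to let $c\to\infty$ rather than $c\to 0$: only in this regime does the limit indicator $\mathbf{1}(\xi_n>1)$ couple with the multiplicative growth of $\xi_n^s$ to beat $1$, while the contribution of the insurance risk $Y_n$ becomes irrelevant in the limit.
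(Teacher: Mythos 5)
Your proof is correct and follows essentially the same route as the paper: pick $n$ so that $E\bigl((A_1\cdots A_n)^s\mathbf{1}(A_1\cdots A_n>1)\bigr)>1$ via the product formula and the bound $E\bigl(\xi_n^s\mathbf{1}(\xi_n\le 1)\bigr)\le 1$, then let $c\to\infty$ using the limit identity of Lemma~\ref{laskulemma} with $\chi=Y_n$, $\xi=A_1\cdots A_n$. The only cosmetic difference is in the final limit step: the paper restricts to a minorant (dropping the third, non-monotone term of the set decomposition \eqref{joukkoesitys}) so that monotone convergence applies, whereas you apply dominated convergence to the full expression, justified by the integrable dominating function $\xi_n^s$ coming from $s<\I(A)$; both are valid, and your version makes the role of the hypothesis $s<\I(A)$ slightly more explicit.
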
 
\begin{proof}
By independence of financial risks
\begin{equation}\label{peruste1}
E((A_1\ldots A_k)^s)=E(A_1^s)\ldots E(A_k^s)\to \infty,
\end{equation}
as $k \to \infty$. On the other hand
\begin{equation}\label{peruste2}
E((A_1\ldots A_k)^s)=E((A_1\ldots A_k)^s\mathbf{1}(A_1\ldots A_k>1))+E((A_1\ldots A_k)^s\mathbf{1}(A_1\ldots A_k\leq1)),
\end{equation}
where 
\begin{equation*}
E((A_1\ldots A_k)^s\mathbf{1}(A_1\ldots A_k\leq1))\leq 1.
\end{equation*}
Since \eqref{peruste1} grows without limitation while the last term of \eqref{peruste2} remains uniformly bounded for each $k$, we see that there is a number $n \in \mathbb{N}$ for which
\begin{equation}\label{perusteko}
E((A_1\ldots A_n)^s\mathbf{1}(A_1\ldots A_n>1))>1.
\end{equation}

Choose $\chi=Y_n$ and $\xi=A_1\ldots A_n$ in Lemma \ref{laskulemma}. By representation \eqref{joukkoesitys}  and the positivity of the random variable $(A_1\ldots A_n)^s$ we get for each $c>0$ the estimate
\begin{eqnarray*}
& & E((A_1\ldots A_n)^s\mathbf{1}(Y_n+A_1\ldots A_n c>c)) \\
&\geq & E((A_1\ldots A_n)^s[\mathbf{1}(Y_n>0,A_1 \ldots A_n>1) \\
&+&\mathbf{1}(\frac{Y_n}{1-A_1 \ldots A_n}<c,Y_n\leq 0,A_1 \ldots A_n> 1)]).
\end{eqnarray*}
The previous minorant converges to $E((A_1\ldots A_n)^s\mathbf{1}(A_1 \ldots A_n>1))$, as $c \to \infty$ by the monotone convergence theorem. This limit exceeds the level $1$ by the result of formula \eqref{perusteko}.  
\end{proof}

We are now ready to present the proof. Firstly, if $\I(B)\leq \I^1(A)$, we may use the trivial bound 
\begin{equation}\label{trivialbound}
\limsup_{U_0 \to \infty} \frac{\log P(T_{U_0}<\infty)}{\log U_0}\geq \limsup_{U_0 \to \infty} \frac{\log P(Y_1>U_0)}{\log U_0} =-\I(B).
\end{equation}
Assume then that inequality $\I(B)>\I^1(A)$ holds. Iteration of the stochastic fixed point equation $\bar{Y}\stackrel{d}{=}B+A\bar{Y}^+$ combined with the monotonicity of the function $x \mapsto x^+$ yields
\begin{eqnarray*}
(\bar{Y})^+&\stackrel{d}{=}& (B_1+A_1\bar{Y}^+)^+ \\
&\geq & (B_1+A_1\bar{Y})^+ \\
&\stackrel{d}{=}& (B_1+A_1B_2+A_1 A_2 \bar{Y}^+)^+ \\
&\geq& (B_1+A_1B_2+A_1 A_2 \bar{Y})^+ \\
&\vdots& \\
&\stackrel{d}{=}&(Y_k+A_1\ldots A_k \bar{Y}^+)^+ \\
&\geq & (Y_k+A_1\ldots A_k \bar{Y})^+
\end{eqnarray*}
for each $k \in \mathbb{N}$. Here the random variable $\bar{Y}$ on the right hand side is independent of the random vectors $(A_1,B_1),\ldots ,(A_k,B_k)$.

Using the previous estimate we obtain for all $c>0$ and $k\in \mathbb{N}$ 
\begin{eqnarray*}
& &E((\bar{Y}-c)^s\mathbf{1}(\bar{Y}>c)) \\
& \geq & E((Y_k+A_1\ldots A_k \bar{Y}-c)^s\mathbf{1}(Y_k+A_1\ldots A_k \bar{Y}>c)) \\
&\geq &E((Y_k+A_1\ldots A_k \bar{Y}-c)^s\mathbf{1}(Y_k+A_1\ldots A_k c>c)\mathbf{1}(\bar{Y}>c)) \\
& \geq & E((A_1 \ldots A_k(\bar{Y}-c))^s\mathbf{1}(Y_k+A_1\ldots A_k c>c)\mathbf{1}(\bar{Y}>c)) \\
&=& E((A_1 \ldots A_k)^s\mathbf{1}(Y_k+A_1\ldots A_k c>c))E((\bar{Y}-c)^s\mathbf{1}(\bar{Y}>c)).
\end{eqnarray*}
Keeping in mind the assumption $\bar{y}=\infty$ this implies for any  $s<\I(\bar{Y})$ that
\begin{equation}\label{alleyksi}
E((A_1 \ldots A_k)^s\mathbf{1}(Y_k+A_1\ldots A_k c>c))\leq 1.
\end{equation}

Define, for given $c>0$ and $k \in \mathbb{N}$,
\begin{equation}\label{aaceekoo}
A_{c,k}=A_1 \ldots A_k \mathbf{1}(Y_k+A_1\ldots A_k c>c).
\end{equation}
Equation \eqref{alleyksi} implies the inequality
\begin{equation}\label{alleyksi2}
\I(\bar{Y})\leq \I^1(A_{c,k}).
\end{equation}

Assume $\epsilon>0$ is given. Choose $\hat{c}=\hat{c}_{\I^1(A)+\epsilon}$ and $\hat{n}=\hat{n}_{\I^1(A)+\epsilon}$ according to Lemma \ref{alleyksilemma}. Now 
\begin{equation*}
\I(\bar{Y})\leq \I^1(A_{\hat{c},\hat{n}})
\end{equation*}
by Equation \eqref{alleyksi2}. Since
\begin{equation*}
E((A_1\ldots A_{\hat{n}})^{\I^1(A)+\epsilon}\mathbf{1}(Y_n+A_1\ldots A_{\hat{n}} \hat{c}>\hat{c}))>1,
\end{equation*}
we deduce the inequality
\begin{equation*}
\I^1(A_{\hat{c},\hat{n}})\leq \I^1(A)+\epsilon,
\end{equation*}
which proves \eqref{ultimateruin} under assumptions of \eqref{assumptionsxx}. 

\subsubsection{General setting}

We will now omit assumptions of \eqref{assumptionsxx} and prove \eqref{ultimateruin}. If $\I(B)\leq\I^1(A)$, formula \eqref{ultimateruin} holds because the upper limit of Section \ref{upperbounds} is always valid and lower bound can be obtained from trivial estimate \eqref{trivialbound}.

We may now assume $\I(B)>\I^1(A)$. Fix small numbers $\epsilon_1>0$ and $\epsilon_2>0$. Then construct random variables $A_{\epsilon_1}$ and $B_{\epsilon_2}$ as in lemmas \ref{aarviolemma} and \ref{omamoma}. Now $A\geq A_{\epsilon_1}$ and $B\geq B_{\epsilon_2}$ almost surely. By \eqref{hassujuttu1} and the solution of the restricted case in \ref{restricted} we obtain
\begin{equation}\label{hassujuttu2}
\limsup_{U_0 \to \infty} \frac{\log P(T_{U_0}<\infty)}{\log U_0}\geq -\min(\I^1(A_{\epsilon_1}),\I(B_{\epsilon_2})).
\end{equation}
Since $\I^1(A_{\epsilon_1})=\I^1(A)+\epsilon_1$ and $\I(B_{\epsilon_2})=\I(B)+\epsilon_2$ and \eqref{hassujuttu2} holds for arbitrarily small numbers $\epsilon_1$ and $\epsilon_2$, we get 
\begin{equation*}
\limsup_{U_0 \to \infty} \frac{\log P(T_{U_0}<\infty)}{\log U_0}= -\min(\I^1(A),\I(B))
\end{equation*}
in all possible cases. \qed

\section{Conclusions and comments}\label{conclusions}

\subsection{Classical random walk} 

In Section \ref{intro} the classical random walk was briefly mentioned. Formally, a random walk can be recovered from model \eqref{modeldesc} by setting $A\equiv 1$. This leads to a process $(S_n)$, where
\begin{equation}\label{originalrandomwalk}
S_n=B_1+\ldots +B_n.
\end{equation}
Denote $\bar{S}=\sup_{n} S_n$. Model \eqref{modeldesc} with non-degenerate financial risks converges, whereas the process $S_n$ has a drift. For meaningful analysis this drift needs to be negative.

If $E(|B|)<\infty$, the negative drift is known to be equivalent with the condition $E(B)<0$. Under this assumption Borovkov, see \cite{borovkov1976stochastic} page 140 formula (54), derived a representation for the moment index $\I(\bar{S})$ using ascending ladder heights associated with the random process $(S_n)$. In this case 
\begin{equation*}
\I(\bar{S})=\I(B)-1,
\end{equation*}
which is completely different from the result obtained for the process $(Y_n)$.

\subsection{Final remarks}\label{sublund}

The quantity \eqref{lundbergdef} used in the proof of the main theorem has an alternative representation via moment indices.

Let $(Z_i)$ be an IID-sequence of positive random variables. We define 
\begin{equation}\label{lundberg1}
Z_\infty=\sum_{k=1}^\infty Z_1\ldots Z_{k}
\end{equation}
and
\begin{equation}\label{lundberg3}
\bar{Z}=\sup_{k}\{Z_1\ldots Z_k\}.
\end{equation}

The random variable defined in \eqref{lundberg1} makes sense when the stochastic series converges. The following lemma proves the equality of moment indices.

\begin{lemma} Let $Z$ be a positive random variable that satisfies assumptions \ref{oz1}-\ref{oz3} of Theorem \ref{viit}. Then the random variables $Z_\infty$ and $\bar{Z}$, defined by \eqref{lundberg1} and \eqref{lundberg3} respectively, satisfy
\begin{equation}\label{lundbergyht1}
\I^1(Z)=\I(Z_\infty)=\I(\bar{Z}).
\end{equation}
\end{lemma}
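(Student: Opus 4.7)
My plan is to realise the lemma as a specialisation of the main model of the paper by setting $A_i=B_i=Z_i$, to derive $\I(Z_\infty)=\I^1(Z)$ as a direct corollary of Theorem \ref{kolmastheorem}, and to pin down $\I(\bar Z)=\I^1(Z)$ by a one-line sandwich between $\bar Z$ and the partial products $Z_1\cdots Z_n$.

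First I would check the hypotheses of Theorem \ref{kolmastheorem} for the pair $(A,B)=(Z,Z)$: \ref{as3} is inherited because $(Z_i,Z_i)$ is IID, \ref{as1} follows from \ref{oz1} together with \ref{oz2} and \ref{oz3} (the latter two rule out $Z\equiv 1$), and \ref{as2} is immediate from \ref{oz1}. With these choices the model reduces to $Y_n=\sum_{i=1}^n Z_1\cdots Z_{i-1}Z_i=\sum_{i=1}^n Z_1\cdots Z_i$, which is non-decreasing in $n$, so $\bar Y=\lim_n Y_n=Z_\infty$ pointwise (possibly $+\infty$). Next I would verify $\bar y=\infty$ using condition \ref{gohta3} of Theorem \ref{viit}: a value $c>0$ with $P(Z(1+c)\le c)=1$ would force $\esssup Z\le c/(1+c)<1$, contradicting \ref{oz2}. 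Theorem \ref{kolmastheorem} then applies and, combined with the tail-exponent identity \eqref{tailconnection} and $\I^1(Z)\le \I(Z)$, gives
\begin{equation*}
\I(Z_\infty)=\I(\bar Y)=\min(\I^1(Z),\I(Z))=\I^1(Z).
\end{equation*}

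For the second equality of \eqref{lundbergyht1} I would sandwich $\bar Z$. Since every summand in $Z_\infty$ is strictly positive, $\bar Z=\sup_k Z_1\cdots Z_k\le \sum_k Z_1\cdots Z_k=Z_\infty$, so part \ref{kohta3} of Lemma \ref{lin} yields $\I(\bar Z)\ge \I(Z_\infty)=\I^1(Z)$. In the other direction, for each fixed $n$ one has $\bar Z\ge Z_1\cdots Z_n>0$, so by independence
\begin{equation*}
E(\bar Z^s)\ge E((Z_1\cdots Z_n)^s)=E(Z^s)^n,\qquad s\ge 0.
\end{equation*}
If $E(Z^s)>1$ the right-hand side tends to $+\infty$ with $n$, forcing $E(\bar Z^s)=\infty$. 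Hence $E(\bar Z^s)<\infty$ can hold only when $E(Z^s)\le 1$, giving $\I(\bar Z)\le \I^1(Z)$ and finishing the proof.

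The one step that requires real care is the verification $\bar y=\infty$, since without it Theorem \ref{kolmastheorem} is unavailable; everything else is a direct consequence of earlier material. An attractive feature of this route is that it bootstraps entirely off the paper's own main result and Lemma \ref{lin}, and so avoids any appeal to classical Cram\'er--Lundberg random-walk theory applied to $\log\bar Z$.
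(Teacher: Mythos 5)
Your proof is correct and follows essentially the paper's route: both invoke Theorem \ref{kolmastheorem} to identify $\I(Z_\infty)$ with $\I^1(Z)$ and then sandwich $\bar Z$ between a single partial product $Z_1\cdots Z_k$ and $Z_\infty$. The only difference is your specialisation $(A,B)=(Z,Z)$ in place of the paper's $(A,B)=(Z,1)$; both choices work, yours has the small tidiness advantage that $\bar Y$ equals $Z_\infty$ exactly rather than $1+Z_\infty$, and you are slightly more thorough in explicitly checking the hypothesis $\bar y=\infty$, which the paper's proof leaves implicit.
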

\begin{proof} To see that $\I^1(Z)=\I(Z_\infty)$, take $Z=A$ and $B \equiv 1$ in \eqref{ultimateruin}. 

For the last equality of \eqref{lundbergyht1}, note first that $\bar{Z}\leq Z_\infty$. By Lemma \ref{lin} part \ref{kohta3}, $\I(\bar{Z})\geq \I(Z_\infty)$. Furthermore, for any $k \in \mathbb{N}$ and $s>0$
\begin{equation}\label{lundbergyht2}
E(\bar{Z}^s)\geq E((Z_1 \ldots Z_k )^s)=E(Z^s)^k.
\end{equation}
Since formula \eqref{lundbergyht2} renders the inequality $\I^1(Z)<\I(\bar{Z})$ impossible, we conclude that $\I^1(Z)\geq \I(\bar{Z})$. 
\end{proof}

\section*{Acknowledgements}
This work was financially supported by the Finnish doctoral programme in stochastics and statistics (FDPSS) and by Academy of Finland, grant number 251170. Special thanks are due to Harri Nyrhinen for his diligent guidance throughout the writing process of the paper. In addition, suggestions made by an anonymous referee greatly improved the manuscript.

\bibliographystyle{acm}
\bibliography{C:/Users/JaakkoL/Desktop/MyBib/mybib}

\end{document}